\let\csname equation*\endcsname=\relax
\let\csname endequation*\endcsname=\relax
\DeclareMathOperator*{\argmin}{arg\,min}
\newtheorem{thm}{Theorem}
\newtheorem{lemma}[thm]{Lemma}
\newtheorem{prop}[thm]{Proposition}
\begin{document}

\def \mcI {\mathcal{I}}
\def \mcN {\mathcal{N}}
\def \mcT {\mathcal{T}}
\def \mcJ {\mathcal{J}}

\def \bbR {{\mathbb R}}
\def \bbN {{\mathbb N}}
\def \bbS {{\mathbb S}}
\def \bbD {{\mathbb D}}

\def \bmA {{\bm A}}
\def \bmB {{\bm B}}
\def \bmD {{\bm D}}
\def \bmE {{\bm E}}
\def \bmG {{\bm G}}
\def \bmH {{\bm H}}
\def \bmI {{\bm I}}
\def \bmP {{\bm P}}
\def \bmQ {{\bm Q}}
\def \bmR {{\bm R}}
\def \bmS {{\bm S}}
\def \bmW {{\bm W}}
\def \bmLambda {{\bm\Lambda}}
\def \bmgamma {{\bm\gamma}}

\def \bmb {{\bm b}}
\def \bmc {{\bm c}}
\def \bmd {{\bm d}}
\def \bmf {{\bm f}}
\def \bff {{\bm f}}
\def \bmg {{\bm g}}
\def \bmh {{\bm h}}
\def \bmm {{\bm m}}
\def \bmq {{\bm q}}
\def \bmu {{\bm u}}
\def \bmv {{\bm v}}
\def \bmw {{\bm w}}
\def \bmx {{\bm x}}
\def \bmy {{\bm y}}
\def \bmz {{\bm z}}
\def \bfx {{\bm x}}

\def \bR {\bbR}
\def \bRd {\bbR^{d}}
\def \bbRM {\bbR^{M}}
\def \bbRm {\bbR^{m}}
\def \bbRmo {\bbR^{m_0}}
\def \bbRn {\bbR^{n}}
\def \bbRd {\bbR^{d}}
\def \bbRpd {\bbR_+^{d}}
\def \bbRpn {\bbR_+^{n}}
\def \bbRnn {\bbR^{n\times n}}
\def \bbRmm {\bbR^{m\times m}}
\def \bbRmomo {\bbR^{m_0\times m_0}}
\def \bbRmod {\bbR^{m_0\times d}}
\def \bbRdd {\bbR^{d\times d}}
\def \bbSn {\bbS^{n}}
\def \bbSpd {\bbS_+^{d}}
\def \bbSpn {\bbS_+^{n}}
\def \bbSpm {\bbS_+^{m}}
\def \bbSpmo {\bbS_+^{m_0}}
\def \bbSkn {\bbS_k^{n}}
\def \bbSLn {\bbS_L^{n}}
\def \bbDppn {\bbD_{++}^{n}}
\def \bbDppm {\bbD_{++}^{m}}
\def \bbDppd {\bbD_{++}^{d}}

\def \indi {\iota_{\bbR_+^d}}
\def \prox {\mathrm{prox}}
\def \env {\mathrm{env}}
\def \diag {\mathrm{diag}}
\def \TMC {\text{TMC}}
\def \EM {\text{EM}}
\def \IEM {\text{IEM}}

\def \xk {{\bm x}^{k}}
\def \yk {{\bm y}^{k}}
\def \zk {{\bm z}^{k}}
\def \fk {{\bm f}^{k}}
\def \hk {{\bm h}^{k}}
\def \bk {{\bm b}^{k}}
\def \ck {{\bm c}^{k}}
\def \pk {p^{k}}
\def \qk {{\bm q}^{k}}
\def \tk {t^{k}}
\def \uk {{\bm u}^{k}}
\def \vk {{\bm v}^{k}}

\def \xkk {{\bm x}^{k+1}}
\def \ykk {{\bm y}^{k+1}}
\def \zkk {{\bm z}^{k+1}}
\def \fkk {{\bm f}^{k+1}}
\def \hkk {{\bm h}^{k+1}}
\def \bkk {{\bm b}^{k+1}}
\def \ckk {{\bm c}^{k+1}}
\def \pkk {p^{k+1}}
\def \qkk {{\bm q}^{k+1}}
\def \tkk {t^{k+1}}
\def \ukk {{\bm u}^{k+1}}
\def \vkk {{\bm v}^{k+1}}
\def \tu {\tilde{{\bm u}}}
\def \tf {\tilde{{\bm f}}}
\def \tuk {\tilde{{\bm u}}^{k}}
\def \tvk {\tilde{{\bm v}}^{k}}
\def \txk {\tilde{{\bm x}}^{k}}
\def \tyk {\tilde{{\bm y}}^{k}}
\def \tzk {\tilde{{\bm z}}^{k}}
\def \tfk {\tilde{{\bm f}}^{k}}
\def \thk {\tilde{{\bm h}}^{k}}
\def \tbk {\tilde{{\bm b}}^{k}}
\def \tck {\tilde{{\bm c}}^{k}}
\def \tqk {\tilde{{\bm q}}^{k}}

\def \tbkk {\tilde{{\bm b}}^{k+1}}
\def \tckk {\tilde{{\bm c}}^{k+1}}
\def \tqkk {\tilde{{\bm q}}^{k+1}}
\def \tukk {\tilde{{\bm u}}^{k+1}}
\def \tvkk {\tilde{{\bm v}}^{k+1}}
\def \txkk {\tilde{{\bm x}}^{k+1}}
\def \tykk {\tilde{{\bm y}}^{k+1}}
\def \tfkk {\tilde{{\bm f}}^{k+1}}
\def \thkk {\tilde{{\bm h}}^{k+1}}

\def \fkl {{\bm f}^{k,l}}
\def \fkll {{\bm f}^{k,l-1}}
\def \tfkl {\tilde{{\bm f}}^{k,l}}
\def \tfkll {\tilde{{\bm f}}^{k,l-1}}
\def \hkl {{\bm h}^{k,l}}
\def \hkll {{\bm h}^{k,l-1}}
\def \bkl {{\bm b}^{k,l}}
\def \bkll {{\bm b}^{k,l-1}}
\def \tbkl {\tilde{{\bm b}}^{k,l}}
\def \tbkll {\tilde{{\bm b}}^{k,l-1}}
\def \ckl {{\bm c}^{k,l}}
\def \ckll {{\bm c}^{k,l-1}}
\def \tckl {\tilde{{\bm c}}^{k,l}}
\def \tckll {\tilde{{\bm c}}^{k,l-1}}
\def \Skl {{\bm S}^{k,l}}
\def \Skll {{\bm S}^{k,l-1}}

\def \tvarphi {\tilde{\varphi}}

\newcommand\leqs{\leqslant}
\newcommand\geqs{\geqslant}

\title[APPGA with a generalized Nesterov momentum for PET reconstruction]{An accelerated preconditioned proximal gradient algorithm with a generalized Nesterov momentum for PET image reconstruction}

\author{Yizun Lin$^1$, Yongxin He$^1$, C. Ross Schmidtlein$^2$ and Deren Han$^{3*}$}

\footnotetext{\hspace{-0.5em}Author to whom correspondence should be addressed.}

\address{
$^1\ $Department of Mathematics, Jinan University, Guangzhou 510632, China\\
$^2\ $Department of Medical Physics, Memorial Sloan Kettering Cancer Center, New York, NY 10065 USA\\
$^3\ $School of Mathematical Sciences, Beihang University, Beijing 100191, China
}
\ead{linyizun@jnu.edu.cn, wheyongxin@163.com, schmidtr@mskcc.org and handr@buaa.edu.cn}
\vspace{10pt}
\begin{indented}
\item[]\today
\end{indented}

\begin{abstract}
This paper presents an Accelerated Preconditioned Proximal Gradient Algorithm (APPGA) for effectively solving a class of Positron Emission Tomography (PET) image reconstruction models with differentiable regularizers. We establish the convergence of APPGA with the Generalized Nesterov (GN) momentum scheme, demonstrating its ability to converge to a minimizer of the objective function with rates of $o\left(1/k^{2\omega}\right)$ and $o\left(1/k^{\omega}\right)$ in terms of the function value and the distance between consecutive iterates, respectively, where $\omega\in(0,1]$ is the power parameter of the GN momentum. To achieve an efficient algorithm with high-order convergence rate for the higher-order isotropic total variation (ITV) regularized PET image reconstruction model, we replace the ITV term by its smoothed version and subsequently apply  APPGA to solve the smoothed model. Numerical results presented in this work indicate that as $\omega\in(0,1]$ increase, APPGA converges at a progressively faster rate. Furthermore, APPGA exhibits superior performance compared to the preconditioned proximal gradient algorithm and the preconditioned Krasnoselskii-Mann algorithm. The extension of the GN momentum technique for solving a more complex optimization model with multiple nondifferentiable terms is also discussed.\\

\noindent Keywords: accelerated preconditioned proximal gradient algorithm, image reconstruction, positron emission tomography, total variation.
\end{abstract}

%
%
%
%
%

\section{Introduction}\label{sec:intro}

Positron Emission Tomography (PET) serves as a vital medical imaging modality, enabling noninvasive estimation of the biodistribution of radio-labeled drugs emitting positrons in patients. The generation of these images typically involves a reconstruction process, where tomographic data undergoes iterative ``deblurring" to yield progressively clearer images. However, owing to limitations in data counts stemming from health concerns, fundamental physical processes, and patient tolerability, the reconstructed images are often excessively noisy, unless reinforced by additional constraints such as regularization techniques \cite{yu2002edge}. Furthermore, the image reconstruction process must be completed within a few minutes after data acquisition. Consequently, achieving high-quality image  within a clinically feasible time frame poses a significant challenge. In this study, we introduce a novel smooth convex penalty and propose the utilization of a preconditioning technique alongside a Generalized Nesterov (GN) momentum scheme \cite{lin2023convergence}, to accelerate the reconstruction process of high-quality PET images.

PET images are reconstructed using models of physics and associated statistics, leading to a negative log-likelihood functional that can be minimized to find the most likely activity distribution function given the data. However, the immense data volume and  large number of elements (voxels) to be estimated pose significant challenges, making the resulting system models computationally demanding. To accelerate  convergence, numerous methods have been devised to shorten the iterations (time) required to approach the minima of the negative log-likelihood function. In PET, this is commonly achieved through preconditioning technique \cite{krol2012preconditioned,lin2019krasnoselskii,schmidtlein2017relaxed}, or incremental update scheme \cite{ahn2003globally,guo2022fast,Li2006A,schmidtlein2017relaxed}, which involves solving disjoint ordered subsets of the data \cite{de2001fast,Hudson1994Accelerated,mumcuoglu1994fast}. Without regularization, noise is controlled by truncating the updates before fitting noise degrades the image, followed by applying a post-filter. As this is an iterative deblurring process, the recovery of resolution in the absence of denoising is inherently limited by the influence of noise fitting.

Total variation (TV) regularization models \cite{rudin1992nonlinear}, have gained immense popularity in mitigating noise in various PET studies. More recently, several algorithms have been proposed for TV-type regularized PET image reconstruction models, including the Preconditioned Alternating Projection Algorithm (PAPA) \cite{krol2012preconditioned,li2015effective}, the Alternating Direction Method of Multipliers (ADMM) \cite{boyd2010distributed,chun2014alternating} and the Preconditioned Krasnoselskii-Mann Algorithm (PKMA) \cite{lin2019krasnoselskii}. Specifically, it has been shown in \cite{lin2019krasnoselskii} that PKMA outperforms both PAPA and ADMM for the higher-order TV (HOTV) regularized PET image reconstruction model. Besides, findings in \cite{krol2012preconditioned} highlight PAPA's superior performance, surpassing earlier TV-type methods, such as the nested EM-TV algorithm \cite{sawatzky2008accurate}, the one-step-late method with TV regularization \cite{panin1999total}, and the preconditioned primal-dual algorithm \cite{pock2011diagonal,sidky2012convex}.

Though first-order TV regularization is efficient in eliminating noise while preserving the edges and intricate details of an image, it is susceptible to two notable artifacts, namely ``corner" and ``staircase" effects. Corner artifacts originate from the specific norm employed in the TV functional. The TV semi-norms can be broadly classified into two types, anisotropic TV (ATV) and isotropic TV (ITV), where the ITV semi-norm suppresses ``corner" artifact formation. These semi-norms can be viewed as the composition of a convex function (the $\ell_1$ norm for the ATV or the $\ell_2$ norm for the ITV) with a linear transformation. It is well-known that the ATV can be smoothed by the Huber function, yet there have been few methods proposed to smooth ITV. As shown in Figure \ref{ATVITV_ReconUniform}, the employment of ATV as a regularizer for PET imaging often results in smooth structures appearing squared-off, which fails to accurately reflect the general biodistributions of radio-labeled drugs. Conversely, ITV does not exhibit this issue. Therefore, ITV stands as the superior choice for PET image reconstruction. The development of a smoothed ITV functional is addressed in this work.

The other prominent TV artifact in PET, ``staircase" artifacts, arises from the image's piecewise smoothness. To mitigate these artifacts while retaining the favorable features of TV's, an HOTV regularization method was proposed and has been shown to be promising for PET image reconstruction \cite{li2015effective,schmidtlein2017relaxed}.  As shown in Figure \ref{FOTVHOTV_ReconBrain}, the unregularized reconstructed image exhibits substantial noise. Although first-order TV (FOTV) regularization efficiently mitigates noise, it concurrently introduces staircase artifacts. Conversely, HOTV regularization proficiently eliminates noise while effectively removing staircase artifacts arising from the application of FOTV regularization. Though PKMA is one of the most efficient existing algorithms for HOTV regularized PET image reconstruction, it may not be able to guarantee a high order convergence rate due to the nonsmoothness of the regularizer. To improve the convergence rate of the solving algorithm, we can refine the regularizer through smoothing and accelerate the algorithm by incorporating momentum technique.

The most well-known momentum scheme is Nesterov's momentum \cite{nesterov1983method}. Beck and Teboulle showed that the Iterative Shrinkage-Thresholding Algorithm (ISTA) exhibits an $O\left(1/k\right)$ convergence rate in terms of the function value (FV-convergence rate), and ISTA with Nesterov's momentum (FISTA) can elevates the FV-convergence rate to $O(1/k^2)$. However, the convergence of the iterative sequence generated by FISTA remains unclear in their work \cite{beck2009fast}. Subsequently, Chambolle and Dossal (CD) addressed this gap in \cite{chambolle2015convergence}, proving that the iterative sequence for FISTA with a refined set of momentum parameters (FISTA-CD) both converged and had an  $O(1/k^2)$ FV-convergence rate. Later, Attouch and Peypouquet demonstrated that FISTA-CD can actually achieve an $o\left(1/k^2\right)$ FV-convergence rate and an $o\left(1/k\right)$ convergence rate in terms of the distance between consecutive iterates \cite{attouch2016rate}. To offer a broader range of momentum algorithms capable of achieving diverse convergence rates in various scenarios, Lin et al. \cite{lin2023convergence} presented a Generalized Nesterov (GN) momentum framework that surpasses both the conventional Nesterov momentum and CD momentum schemes, given a specific selection of momentum parameters. However, the algorithm discussed in \cite{lin2023convergence} lacks the preconditioning technique, which holds significant importance in improved convergence in PET image reconstruction.

In this work, we propose an accelerated preconditioned proximal gradient algorithm (APPGA) with GN momentum scheme for
smoothed higher-order ITV (SHOITV) regularized PET image reconstruction model. We prove that the propose APPGA with the GN momentum scheme can converge to a minimizer of the objective function, achieving convergence rates of $o\left(1/k^{2\omega}\right)$ and $o\left(1/k^{\omega}\right)$ in terms of the function value and the distance between consecutive iterates, respectively, where $\omega\in(0,1]$ is the power parameter of the GN momentum. Numerical results demonstrate that as $\omega\in(0,1]$ increases, APPGA outperforms PKMA and PPGA for the SHOITV regularized PET image reconstruction. Moreover, we highlight extensive applicability of the GN momentum in tackling the original nonsmooth HOITV regularized PET model, emphasizing its robustness and adaptability in comparison to the conventional Nesterov momentum.

The remainder of this paper is organized as follows: In Section 2, we first describe the SHOITV regularized PET image reconstruction model, and then develop the APPGA to solve this model. We analyze in Section 3 the convergence and convergence rate of the APPGA. Section 4 presents the numerical results for comparison of our proposed APPGA with the PKMA and PPGA, subsequently discussing the extension of the GN momentum. Section 5 offers a conclusion.

\vspace{0.5em}
\begin{figure}[htbp]
\centering
\begin{tabular}{ccccc}
\includegraphics[width=0.205\textwidth]{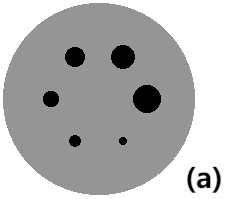}&
\includegraphics[width=0.2\textwidth]{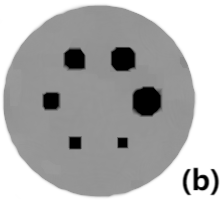}&
\includegraphics[width=0.2\textwidth]{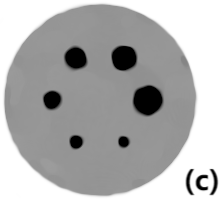}\\
\end{tabular}
\caption{(a) The original uniform phantom: uniform background with six uniform hot spheres of distinct radii; (b) the image reconstructed by using ATV regularization; (c) the image reconstructed by using ITV regularizatio.}
\label{ATVITV_ReconUniform}
\end{figure}
\vspace{-1em}

\begin{figure}[htbp]
\centering
\begin{tabular}{ccccc}
\includegraphics[width=0.18\textwidth]{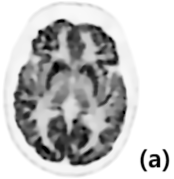}&
\includegraphics[width=0.18\textwidth]{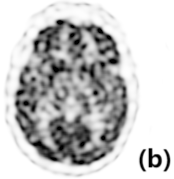}&
\includegraphics[width=0.18\textwidth]{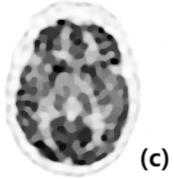}&
\includegraphics[width=0.18\textwidth]{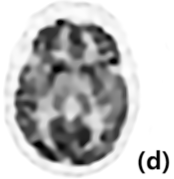}\\
\end{tabular}
\caption{(a) The original brain phantom: high quality clinical PET brain image; (b) the unregularized reconstructed image; (c) the image reconstructed by using first-order TV regularization; (d) the image reconstructed by using higher-order TV regularization.}
\label{FOTVHOTV_ReconBrain}
\end{figure}

\section{PET image reconstruction}\label{SecPETModel}
In this section, we introduce the Smoothed Higher-Order Isotropic Total Variation (SHOITV) regularization model for PET image reconstruction, and then develop an Accelerated Preconditioned Proximal Gradient Algorithm (APPGA) with a generalized Nesterov momentum scheme to solve the model.

\subsection{SHOITV regularized PET image reconstruction model}
We begin with describing the PET image reconstruction model with a HOITV regularizer. We denote by $\bbR_+$ the set of all nonnegative real numbers. For positive integers $m$ and $d$, we let $\boldsymbol{A} \in \mathbb{R}_+^{m \times d}$ denote the PET system matrix, whose $(i, j)$-th entry specifies the probability that the $i$th detector bin pair will be able to detect photon pairs released from the $j$th voxel of a radiotracer distribution $\bmf \in \mathbb{R}_+^d$ within a patient. Note that these probabilities are derived from a physical model of the PET system and object in the scanner. Let $\bmgamma \in \mathbb{R}_+^m$ denote the average background noise. The projection data $\bmg\in\mathbb{R}_+^m$ of the PET system can be calculated by
\begin{equation}\label{model-Poisson}
\bmg={\text{Poisson}}(\bmA\bmf+\bmgamma),
\end{equation}
where $\text{Poisson}(\bmx)$ denotes a Poisson-distributed random vector with mean $\bmx$. System \eqref{model-Poisson} may be solved by minimizing the fidelity term
\begin{equation}\label{FidelityTerm}
F(\bmf) := \langle\bmA\bmf,{\bf1}_m\rangle-\langle\ln\left(\bmA\bmf+\bmgamma\right),\bmg\rangle,
\end{equation}
where ${\bf1}_m\in\bbRm$ is the vector whose components are all 1, the logarithmic function at $\bmx\in\bbRn$ is defined by $\ln\bmx:=(\ln x_1, \ln x_2,\ldots,\ln x_n)^\top$, and $\langle \bmx,\bmy\rangle:=\sum_{i=1}^{n}x_iy_i$ is the inner product of $\bmx,\bmy\in\bbRn$.

It is well-known that system \eqref{model-Poisson} is ill-posed,
directly minimizing the fidelity term \eqref{FidelityTerm} will lead to severe noise in the reconstructed image \cite{yu2002edge}. To address this issue, regularization (penalty) terms were introduced as a part of the reconstruction model. Using both the first and second order Isotropic TV (ITV) penalties, we get the following HOITV regularized PET image reconstruction model
\begin{equation}\label{model:HOTV}
\argmin_{\bmf\in\bbRd}\left\{F(\bmf)+\varphi_1(\bmB_1\bmf)+\varphi_2(\bmB_2\bmf)+\iota(\bmf)\right\},
\end{equation}
where $\varphi_1\circ B_1$ and $\varphi_2\circ B_2$ represent the first and second-order ITV, respectively. The indicator function $\iota$ on $\bbRpd$, defined by
$$
\iota(\bmx):=\begin{cases}
0, & \text{if}\ \bmx\in\bbRpd,\\
+\infty, & \text{else},
\end{cases}
$$
is included in model \eqref{model:HOTV} to ensure the nonnegativity of $\bmf$.

We next recall the definition of HOITV.
For the sake of simplicity, we present only the 2D HOITV. For the 3D case, readers are referred to \cite{li2015effective}. We assume that $d=N^2$, for a positive integer $N$. We denote by $I_N$ the $N\times N$ identity matrix, $\bmD$ the $N\times N$ backward difference matrix with $D_{j,j}:=1$ and $D_{j,j-1}:=-1$ for $j=2,3,\ldots,N$, and all other entries zero. Through the matrix Kronecker product $\otimes$, the first-order difference matrix $\bmB_1\in\bbR^{2d\times d}$ and the second-order difference matrix $\bmB_2\in\bbR^{4d\times d}$ are defined, respectively, by
\begin{equation*}
\bmB_1:=\left[\begin{array}{c}
\bmI_N\otimes\bmD\\
\bmD\otimes\bmI_N
\end{array}\right],\
\bmB_2:=\left[\begin{array}{c}
\bmI_N\otimes(-\bmD^\top\bmD)\\
(-\bmD^\top)\otimes\bmD\\
(-\bmD^\top\bmD)\otimes\bmI_N\\
\bmD\otimes(-\bmD^\top)
\end{array}\right].
\end{equation*}
The definitions of $\varphi_1$ and $\varphi_2$ are given by
$$
\varphi_1(\bmx):=\lambda_1\sum_{i=1}^d\left\|(x_i,x_{d+i})^\top\right\|_2,\ \mbox{for}\ \bmx\in\bbR^{2d},
$$\vspace{-0.5em}
$$
\varphi_2(\bmx):=\lambda_2\sum_{i=1}^d\left\|(x_i,x_{d+i},x_{2d+i},x_{3d+i})^\top\right\|_2,\ \mbox{for}\ \bmx\in\bbR^{4d},
$$
where $\lambda_1,\lambda_2\in(0,+\infty)$ are the regularization parameters.

Note that the $\ell_2$ norm in HOITV is nondifferentiable, which is not conducive for the development of a solving algorithm with high-order convergence rate. To address this problem, we propose a smoothed approximation function $s_\epsilon$ of the $\ell_2$ norm, which is defined by
\begin{equation}\label{def:smoothL2}
s_\epsilon(\bmx)=\begin{cases}
\|\bmx\|_2-\frac{\epsilon}{2}, &\mbox{if}\ \|\bmx\|_2>\epsilon;\\
\frac{1}{2\epsilon}\|\bmx\|_2^2, &\mbox{otherwise},
\end{cases}
\end{equation}
where $\epsilon\in(0,+\infty)$. It is easy to see that when $\epsilon$ tends to zero, function $s_\epsilon$ tends to the $\ell_2$ norm $\|\cdot\|_2$. We shall show later in the proof of Lemma \ref{lem:phiconvLip} that $s_\epsilon$ is a convex function with a Lipschitz continuous gradient.

By replacing $\|\cdot\|_2$ with $s_\epsilon$ in the HOITV, the proposed Smoothed HOITV (SHOITV) regularized PET image reconstruction model can be written as
\begin{equation}\label{model:SHOTV}
\argmin_{\bmf\in\bbRd}\left\{F(\bmf)+\tvarphi_1(\bmB_1\bmf)+\tvarphi_2(\bmB_2\bmf)+\iota(\bmf)\right\},
\end{equation}
where $\tvarphi_1:\bbR^{2d}\to\bbR$ and $\tvarphi_2:\bbR^{4d}\to\bbR$ are defined by
\begin{gather*}
\tvarphi_1(\bmx):=\lambda_1\sum_{i=1}^d s_\epsilon\left((x_i,x_{d+i})^\top\right),\ \bmx\in\bbR^{2d},\\
\tvarphi_2(\bmx):=\lambda_2\sum_{i=1}^d s_\epsilon\left((x_i,x_{d+i},x_{2d+i},x_{3d+i})^\top\right),\ \bmx\in\bbR^{4d}.
\end{gather*}
For notational convenience, we define $\bmB:=\left(\begin{array}{c}
\bmB_1\\
\bmB_2
\end{array}\right)$ and
$$
\tvarphi(\bmx):=\tvarphi_1(\bmx_{1:2d})+\tvarphi_2(\bmx_{2d+1:6d}),\ \mbox{for}\ \bmx\in\bbR^{6d}.
$$
Then model \eqref{model:SHOTV} can be succinctly reformulated as the following two-term optimization model:
\begin{equation}\label{model:SHOTV2}
\argmin_{\bmf\in\bbRd}\left\{\phi(\bmf)+\iota(\bmf)\right\},
\end{equation}
where
\begin{equation}\label{def:phi}
\phi:=F+\tvarphi\circ\bmB
\end{equation}
is convex and differentiable with a Lipschitz continuous gradient (see Lemma \ref{lem:phiconvLip}).


\subsection{Accelerated preconditioned proximal gradient algorithm}\label{subsec:APPGA}
We then develop an Accelerated Preconditioned Proximal Gradient Algorithm (APPGA) for solving model \eqref{model:SHOTV2}. To this end, we characterize a solution of the model as a fixed-point of a mapping defined via the gradient descent operator of $\phi$ and the proximity operator of $\iota$. Throughout this paper, we let $\bbDppn$ denote the set of $n\times n$ diagonal matrices with positive diagonal entries, and $\Gamma_0(\bbRn)$ denote the class of all proper lower semicontinuous convex functions defined on $\bbRn$. For $\bmH\in\bbDppn$, the $\bmH$-weighted inner product is defined by $\langle\bmx,\bmy\rangle_\bmH:=\langle\bmx,\bmH\bmy\rangle$ for $\bmx,\bmy\in\bbRn$ and the corresponding $\bmH$-weighted $\ell_2$-norm is defined by $\|\bmx\|_\bmH:=\langle\bmx,\bmx\rangle_{\bmH}^{\frac{1}{2}}$. According to \cite{moreau1965proximite}, for a convex function $\psi:\bbRn\to\bbR$, the proximity operator of $\psi$ with respect to $\bmH\in\bbDppn$ at $\bmx\in\bbRn$ is defined by
\begin{equation}\label{def:prox}
\prox_{\psi,\bmH}(\bmx):=\argmin_{\bmu\in\bbRn}\left\{\tfrac{1}{2}\|\bmu-\bmx\|_{\bmH}^2+\psi(\bmu)\right\}.
\end{equation}
In particular, we use $\prox_{\psi}$ for $\prox_{\psi,\bmI}$. The subdifferential of $\psi$ at $\bmx$ is defined by
$$
\partial\psi(\bmx):=\{\bmy\in\bbRn:\psi(\bmz)\geqs\psi(\bmx)+\langle\bmy,\bmz-\bmx\rangle,\ \forall\bmz\in\bbRn\}.
$$

We now establish the fixed-point characterization of a solution of model \eqref{model:SHOTV2}. Throughout this paper, we denote by $L_{\phi}$ the Lipschitz constant of $\nabla\phi$, $\mcI$ the identity operator, and let operator $\mcT:\bbRd\to\bbRd$ be defined by
\begin{equation}\label{defmT}
\mcT:=\prox_{\iota,\bmP^{-1}}\circ\left(\mcI-\bmP\nabla\phi\right),
\end{equation}
where $\bmP\in\bbDppd$. For an operator $T:\bbRn\to\bbRn$, $\bmx\in\bbRn$ is said to be a fixed point of $T$ if $\bmx=T\bmx$.

\begin{prop}\label{prop:FPchar}
Let operator $\mcT$ be defined by
\eqref{defmT}, where $\bmP\in\bbDppd$. Then $\bmf^*$ is a solution of model \eqref{model:SHOTV2} if and only if it is a fixed point of $\mcT$.
\end{prop}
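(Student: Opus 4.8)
The plan is to reduce the statement to the standard first-order optimality condition for model \eqref{model:SHOTV2} and then unwind the definition of $\mcT$ in \eqref{defmT} using the weighted proximity operator. Since $\phi$ in \eqref{def:phi} is convex and differentiable (by the forthcoming Lemma \ref{lem:phiconvLip}) and $\iota\in\Gamma_0(\bbRd)$, Fermat's rule gives that $\bmf^*$ solves \eqref{model:SHOTV2} if and only if $\boldsymbol{0}\in\nabla\phi(\bmf^*)+\partial\iota(\bmf^*)$, equivalently $-\nabla\phi(\bmf^*)\in\partial\iota(\bmf^*)$. The key algebraic fact I would use is that for $\bmH\in\bbDppd$ the weighted proximity operator satisfies the characterization $\bmu=\prox_{\iota,\bmH}(\bmx)$ if and only if $\bmH(\bmx-\bmu)\in\partial\iota(\bmu)$; this follows directly from the optimality condition for the strongly convex minimization problem in \eqref{def:prox}, namely $\boldsymbol{0}\in\bmH(\bmu-\bmx)+\partial\iota(\bmu)$.

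First I would write out the fixed-point equation $\bmf^*=\mcT\bmf^*$. By \eqref{defmT}, this reads
$$
\bmf^*=\prox_{\iota,\bmP^{-1}}\bigl(\bmf^*-\bmP\nabla\phi(\bmf^*)\bigr).
$$
Applying the weighted-prox characterization above with $\bmH=\bmP^{-1}$ and $\bmx=\bmf^*-\bmP\nabla\phi(\bmf^*)$, $\bmu=\bmf^*$, this is equivalent to
$$
\bmP^{-1}\bigl(\bmf^*-\bmP\nabla\phi(\bmf^*)-\bmf^*\bigr)\in\partial\iota(\bmf^*),
$$
i.e. $-\nabla\phi(\bmf^*)\in\partial\iota(\bmf^*)$. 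Here I use that $\bmP\in\bbDppd$ is invertible with $\bmP^{-1}\bmP=\bmI$, so the preconditioner cancels cleanly. This last inclusion is exactly the Fermat condition for \eqref{model:SHOTV2}, which by convexity of $\phi+\iota$ is both necessary and sufficient for global optimality. Chaining the equivalences in both directions yields the claim.

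The argument is essentially routine, so there is no serious obstacle; the only point requiring a little care is the derivation of the weighted proximity characterization $\bmu=\prox_{\iota,\bmH}(\bmx)\iff\bmH(\bmx-\bmu)\in\partial\iota(\bmu)$, which one gets from differentiating (in the subdifferential sense) the strongly convex objective $\tfrac12\|\bmu-\bmx\|_\bmH^2+\iota(\bmu)$ and using that the squared $\bmH$-norm has gradient $\bmu\mapsto\bmH(\bmu-\bmx)$. One should also note explicitly that this minimizer exists and is unique because the objective in \eqref{def:prox} is proper, lower semicontinuous, and strongly convex, so $\prox_{\iota,\bmP^{-1}}$ is well defined as a single-valued operator and the equivalence is genuinely an ``if and only if.'' With that lemma in hand, the proof is a two-line substitution.
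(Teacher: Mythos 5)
Your proposal is correct and follows essentially the same route as the paper: Fermat's rule to obtain $-\nabla\phi(\bmf^*)\in\partial\iota(\bmf^*)$, followed by the characterization of the weighted proximity operator via the subdifferential (which the paper cites from the literature and you derive directly from the optimality condition of the strongly convex problem defining $\prox_{\iota,\bmP^{-1}}$). The two arguments are equivalent, and your extra remarks on well-definedness and the cancellation of the preconditioner are sound.
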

\begin{proof}
By Fermat's rule (Theorem 16.3 of \cite{bauschke2017convex}), we know that $\bmf^*$ is a solution of model \eqref{model:SHOTV2} if and only if ${\bm0}\in\nabla\phi(\bmf^*)+\partial\iota(\bmf^*)$, which is equivalent to
\begin{equation}\label{gradinsubiota}
-\bmP^{-1}\bmP\nabla\phi(\bmf^*)\in\partial\iota(\bmf^*).
\end{equation}
According to relationship between the proximity operator and the subdifferential in Proposition 2.6 of \cite{micchelli2011proximity}, we see that \eqref{gradinsubiota} is equivalent to
\begin{equation*}\label{FPcharinvP}
\bmf^*=\prox_{\iota,\bmP^{-1}}(\bmf^*-\bmP\nabla\phi(\bmf^*)),
\end{equation*}
which implies the desired result.
\end{proof}

The matrix $\bmP$ in \eqref{defmT} is introduced as a preconditioning matrix for accelerating the convergence speed of the proposed algorithm. According to the above fixed-point characterization, model \eqref{model:SHOTV2} can be solved via the fixed-point iteration of $\mcT$ given by
\begin{equation}\label{PPGAiter}
\fkk=\mcT\fk,
\end{equation}
which is called preconditioned proximal gradient algorithm (PPGA). To further accelerate the convergence and improve the convergence rate of the solving algorithm, we adopt the Generalized Nesterov (GN) momentum scheme \cite{lin2023convergence} for PPGA and obtain the following accelerated preconditioned proximal gradient algorithm (APPGA):
\begin{equation}\label{APPGAiter}
\begin{cases}
\tfk=\fk+\theta_k(\fk-\bmf^{k-1})\\
\fkk=\mcT\tfk
\end{cases},
\end{equation}
where $\left\{\theta_k\right\}_{k\in\bbN_+}$ are the momentum parameters. To perform APPGA, two initial vectors $\bmf^0,\bmf^1\in\bbRd$ are required to be given. The closed form of $\prox_{\iota,\bmP^{-1}}$ can be obtained from Equation (23) of \cite{lin2019krasnoselskii}. For any $\bmx\in\bbRd$,
\begin{equation}\label{cloformproiota}
\prox_{\iota,\bmP^{-1}}(\bmx)=\prox_{\iota}(\bmx)=\max(\bmx,\bm{0}).
\end{equation}
Then we can provide the complete iterative scheme of APPGA for solving model \eqref{model:SHOTV2} as follows:
$$
\begin{cases}
\tfk=\fk+\theta_k(\fk-\bmf^{k-1}),\\
\fkk=\max\Big(\tfk-\bmP\Big(\nabla F(\tfk)+\bmB^{\top}\nabla\tvarphi(\bmB\tfk)\Big),\bm{0}\Big),
\end{cases}
$$
where the momentum parameters $\left\{\theta_k\right\}_{k\in\bbN_+}$ are given by
\begin{equation}\label{GNtheta}
\theta_k=\frac{t_{k-1}-1}{t_k},\ t_{k-1}=a(k-1)^\omega+b,\ \ k\in\bbN_+.
\end{equation}

Let $\frac{\bmx}{\bmy}:=\left(\frac{x_1}{y_1},\frac{x_2}{y_2},\ldots,\frac{x_n}{y_n}\right)^\top$ for $\bmx,\bmy\in\bbRn$. We compute that the gradients of $F$ and $s_\epsilon$ are given by
\begin{equation*}\label{eq:gradFidelity}
\nabla F(\bmf)=\bmA^{\top}\left({\bf1}_m-\frac{\bmg}{\bmA\bmf+\bmgamma} \right)
\end{equation*}
and
\begin{equation}\label{gradenvL2}
\nabla s_\epsilon(\bmx)=\frac{\bmx}{\max\{\|\bmx\|_2,\epsilon\}},
\end{equation}
respectively. According to \eqref{gradenvL2}, the gradient of $\tvarphi$ at $\bmx\in\bbR^{6d}$ is given by
\begin{align*}
&\big(\nabla\tvarphi(\bmx)\big)_i=\frac{\lambda_1x_{i}}{\max\{\|(x_{i},x_{d+i})^\top\|_2,\epsilon\}},\\
&\big(\nabla\tvarphi(\bmx)\big)_{d+i}=\frac{\lambda_1x_{d+i}}{\max\{\|(x_{i},x_{d+i})^\top\|_2,\epsilon\}},\\
&\big(\nabla\tvarphi(\bmx)\big)_{jd+i}=\frac{\lambda_2x_{jd+i}}{\max\{\|(x_{2d+i},x_{3d+i},x_{4d+i},x_{5d+i})^\top\|_2,\epsilon\}},
\end{align*}
for $i=1,2,\ldots,d$ and $j=2,3,4,5$.

\section{Convergence analysis of APPGA}\label{sec:convAPPGA}
In this section, we analyze the convergence and convergence rate of APPGA. Throughout this section, we always let $\bmf^*$ be a global minimizer of $\Phi$, where $\Phi:=\phi+\iota$. We also let $\bmP\in\bbDppd$, $p_{\max}$ be the maximal diagonal entry of $\bmP$. For notational convenience, we define
\begin{equation}\label{defetaktauk}
\eta_k:=\Phi(\fk)-\Phi(\bmf^*),\ \ \tau_k:=\frac{1}{2}\left\|\fk-\bmf^{k-1}\right\|_{\bmP^{-1}}^2
\end{equation}
and
\begin{equation}\label{defhk}
\bmh^k:=t_k\tfk+(1-t_k)\fk,
\end{equation}
for $k\in\bbN_{+}$. Then we have the following proposition that is an essential tool for analyzing the convergence and convergence rate. We postpone the proof of this proposition until the desired Lemma \ref{lem:neqPhizPhiy} is established.

\begin{prop}\label{prop1forconverg}
Let $\theta_k = \frac{t_{k-1}-1}{t_k}$, where $t_k\neq0$ for all $k\in\bbN_+$, and define
\begin{equation}\label{defepsilon}
\varepsilon_k:=2t_{k-1}^2\eta_k+\left\|\bmh^k-\bmf^*\right\|_{\bmP^{-1}}^2.
\end{equation}
If $p_{\max}\leqs\frac{1}{L_{\phi}}$ and there exists $K\in\bbN_+$ such that $t_k(t_k-1)\leqs t_{k-1}^2$ for all $k>K$, then the following hold:
\begin{itemize}
\item[$(i)$] $\varepsilon_{k+1}\leqs \varepsilon_k$ for all $k > K$ and $\lim_{k\to\infty}\varepsilon_k$ exists,
\item[$(ii)$] $\eta_k\leqs\frac{\varepsilon_k}{2t_{k-1}^2}$ for all $k>K$.
\item[$(iii)$] $\sum_{k=1}^{\infty}\left[t_{k-1}^2-t_k(t_k-1)\right]\eta_k\leqs\frac{\varepsilon_1}{2}$.
\end{itemize}
\end{prop}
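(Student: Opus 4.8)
The plan is to first establish a one-step descent inequality of the form $\varepsilon_{k+1} \leqs \varepsilon_k - 2[t_{k-1}^2 - t_k(t_k-1)]\eta_k$ (or something close to it), and then harvest the three conclusions from it. The crucial ingredient will be the postponed Lemma~\ref{lem:neqPhizPhiy}, which (in the spirit of the standard FISTA analysis) should provide, for the proximal-gradient step $\fkk = \mcT\tfk$ under the stepsize condition $p_{\max}\leqs 1/L_\phi$, an inequality comparing $\Phi(\fkk)$ to $\Phi$ evaluated at an arbitrary point together with a quadratic term in the $\bmP^{-1}$-norm. First I would apply that lemma twice: once with the comparison point $\bmf^*$ and once with the comparison point $\fk$, then form the convex combination with weights $1-1/t_k$ and $1/t_k$ (using $t_k\neq 0$). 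The algebraic identity that makes everything collapse is the standard one: with $\theta_k=(t_{k-1}-1)/t_k$ and $\bmh^k$ defined by \eqref{defhk}, one has $t_k\tfk = \bmh^k + (t_k-1)\fk$ and, crucially, $t_k\fkk - (t_k-1)\fk = \bmh^{k+1}$ when $t_k$ plays the role relating consecutive steps; combined with the three-point (parallelogram-type) identity for $\|\cdot\|_{\bmP^{-1}}^2$, the cross terms telescope and the $\tfk$-dependence is eliminated.

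After the rearrangement I expect to arrive at
$$
2t_k^2\eta_{k+1} + \|\bmh^{k+1}-\bmf^*\|_{\bmP^{-1}}^2 \;\leqs\; 2t_{k-1}^2\eta_k + \|\bmh^k-\bmf^*\|_{\bmP^{-1}}^2 - 2\big(t_{k-1}^2 - t_k(t_k-1)\big)\eta_k,
$$
i.e. $\varepsilon_{k+1}\leqs\varepsilon_k - 2[t_{k-1}^2 - t_k(t_k-1)]\eta_k$ — but I should be careful that the coefficient on $\eta_{k+1}$ that falls out of the two-point combination is $2t_k(t_k-1)$ rather than $2t_k^2$, and that the hypothesis $t_k(t_k-1)\leqs t_{k-1}^2$ for $k>K$ is exactly what is needed to replace $2t_k(t_k-1)\eta_{k+1}$ by the larger $2t_k^2\eta_{k+1}$ only after re-indexing, or alternatively to absorb the discrepancy into the nonnegative residual term. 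This bookkeeping — matching the coefficient $t_k(t_k-1)$ produced by the algebra against the $t_k^2$ appearing in the definition \eqref{defepsilon} of $\varepsilon_{k+1}$, and tracking whether the slack term is $[t_{k-1}^2-t_k(t_k-1)]\eta_k$ or shows up with index $k+1$ — is the part most prone to sign/index errors, and is where I would be most careful.

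Granting the descent inequality, the three conclusions follow quickly. For $(i)$: since $\eta_k = \Phi(\fk)-\Phi(\bmf^*)\geqs 0$ (as $\bmf^*$ is a global minimizer of $\Phi$) and $t_{k-1}^2 - t_k(t_k-1)\geqs 0$ for $k>K$ by hypothesis, the inequality gives $\varepsilon_{k+1}\leqs\varepsilon_k$ for $k>K$; moreover $\varepsilon_k\geqs 0$ by its definition \eqref{defepsilon} (sum of a nonnegative multiple of $\eta_k$ and a norm-squared), so $\{\varepsilon_k\}_{k>K}$ is nonincreasing and bounded below, hence convergent. For $(ii)$: directly from \eqref{defepsilon}, $2t_{k-1}^2\eta_k \leqs \varepsilon_k$ since the remaining term $\|\bmh^k-\bmf^*\|_{\bmP^{-1}}^2$ is nonnegative, giving $\eta_k\leqs\varepsilon_k/(2t_{k-1}^2)$ (using $t_{k-1}\neq 0$). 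For $(iii)$: summing the descent inequality from $k=1$ to $N$ telescopes the $\varepsilon$ terms, yielding $2\sum_{k=1}^{N}[t_{k-1}^2-t_k(t_k-1)]\eta_k \leqs \varepsilon_1 - \varepsilon_{N+1}\leqs\varepsilon_1$; letting $N\to\infty$ and noting all summands are nonnegative (here one uses that the descent inequality, or at least the nonnegativity of each term, holds for all $k\in\bbN_+$, not only $k>K$ — which I would need to double-check is consistent with how Lemma~\ref{lem:neqPhizPhiy} is stated, possibly requiring the sum to start at $K+1$ with the finitely many initial terms absorbed) gives the stated bound $\sum_{k=1}^\infty[t_{k-1}^2-t_k(t_k-1)]\eta_k\leqs\varepsilon_1/2$.
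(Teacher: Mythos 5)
Your proposal is correct and follows essentially the same route as the paper: apply Lemma~\ref{lem:neqPhizPhiy} at $\fk$ and at $\bmf^*$, combine with weights $1-1/t_k$ and $1/t_k$, use $\bmh^{k+1}=t_k\fkk+(1-t_k)\fk$ to telescope the quadratic terms, multiply by $2t_k^2$, and read off $(i)$--$(iii)$ from the resulting inequality $\varepsilon_{k+1}+2\bigl[t_{k-1}^2-t_k(t_k-1)\bigr]\eta_k\leqs\varepsilon_k$. Your two hedges resolve favorably: the coefficient on $\eta_{k+1}$ really is $2t_k^2$ (the $t_k(t_k-1)$ factor lands on the $\eta_k$ side, exactly where the hypothesis $t_k(t_k-1)\leqs t_{k-1}^2$ is needed), and the descent inequality holds for all $k\in\bbN_+$, so the telescoping sum in $(iii)$ may start at $k=1$ as the paper does.
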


To prove Proposition \ref{prop1forconverg}, we first investigate the properties of $\phi$ in the following lemma.

\begin{lemma}\label{lem:phiconvLip}
Let $\phi:\bbRn\to\bbR$ be defined by \eqref{def:phi}. Then $\phi$ is convex and $\nabla\phi$ is Lipschitz continuous.
\end{lemma}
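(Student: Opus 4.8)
The plan is to build $\phi = F + \tvarphi\circ\bmB$ up from its constituents, using that convexity and Lipschitz continuity of the gradient are each preserved under precomposition with a linear map, under finite sums, and under forming separable sums over disjoint blocks of coordinates. The only genuinely new ingredient is the regularity of the scalar building block $s_\epsilon$ of \eqref{def:smoothL2}; once that is in hand, the rest is bookkeeping of Lipschitz constants, which will ultimately give $L_{\phi}\leqs L_{F}+\|\bmB\|_2^2\max\{\lambda_1,\lambda_2\}/\epsilon$ for an appropriate $L_{F}$.

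First I would show that $s_\epsilon$ is convex with $\tfrac1\epsilon$-Lipschitz gradient by recognizing it as the Moreau envelope of $\|\cdot\|_2$ with parameter $\epsilon$, i.e.\ $s_\epsilon(\bmx)=\min_{\bmu}\big\{\|\bmu\|_2+\tfrac{1}{2\epsilon}\|\bmu-\bmx\|_2^2\big\}$. Indeed, the block soft-thresholding prox $\prox_{\epsilon\|\cdot\|_2}(\bmx)$ equals $\bm{0}$ when $\|\bmx\|_2\leqs\epsilon$ and $(1-\epsilon/\|\bmx\|_2)\bmx$ when $\|\bmx\|_2>\epsilon$; substituting this back into the minimum reproduces exactly the two branches of \eqref{def:smoothL2}. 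Since $\|\cdot\|_2\in\Gamma_0(\bbRn)$, standard properties of the Moreau envelope (see \cite{bauschke2017convex}) then give that $s_\epsilon$ is finite, convex and continuously differentiable with $\nabla s_\epsilon(\bmx)=\tfrac1\epsilon\big(\bmx-\prox_{\epsilon\|\cdot\|_2}(\bmx)\big)=\bmx/\max\{\|\bmx\|_2,\epsilon\}$ (matching \eqref{gradenvL2}) and with $\nabla s_\epsilon$ being $\tfrac1\epsilon$-Lipschitz. Alternatively, one verifies differentiability directly by checking that the two branches of \eqref{def:smoothL2} agree in value and in gradient across the sphere $\|\bmx\|_2=\epsilon$, and then bounds the Lipschitz modulus of $\bmx\mapsto\bmx/\max\{\|\bmx\|_2,\epsilon\}$ by a short case analysis.

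Next I would propagate this regularity upward. Each $\tvarphi_j$ ($j=1,2$) is $\lambda_j$ times a finite sum of copies of $s_\epsilon$ evaluated on \emph{pairwise disjoint} index groups --- the pairs $\{i,\,d+i\}$ for $\tvarphi_1$ and the quadruples $\{i,\,d+i,\,2d+i,\,3d+i\}$ for $\tvarphi_2$, as $i$ runs over $1,\dots,d$; hence $\tvarphi_j$ is convex and, being a separable sum of $\tfrac1\epsilon$-smooth blocks, has $\nabla\tvarphi_j$ Lipschitz with constant $\lambda_j/\epsilon$. Consequently $\tvarphi$, the separable sum of $\tvarphi_1$ on the first $2d$ and $\tvarphi_2$ on the last $4d$ coordinates of $\bbR^{6d}$, is convex with $\nabla\tvarphi$ Lipschitz of constant $\max\{\lambda_1,\lambda_2\}/\epsilon$. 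Finally $\tvarphi\circ\bmB$ is convex (a convex function precomposed with the linear map $\bmB$), its gradient is $\bmf\mapsto\bmB^{\top}\nabla\tvarphi(\bmB\bmf)$, and this is Lipschitz with constant $\|\bmB\|_2^2\max\{\lambda_1,\lambda_2\}/\epsilon$.

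It remains to handle $F$ and combine. Convexity of $F$ is immediate: it is the sum of the linear term $\bmf\mapsto\langle\bmA\bmf,{\bf1}_m\rangle$ and the terms $\bmf\mapsto-g_i\ln\big((\bmA\bmf)_i+\gamma_i\big)$, each convex since $g_i\geqs0$, $-\ln$ is convex and $\bmA\bmf+\bmgamma$ is affine. For the gradient, $\nabla F(\bmf)=\bmA^{\top}\big({\bf1}_m-\tfrac{\bmg}{\bmA\bmf+\bmgamma}\big)$ has Hessian $\nabla^2F(\bmf)=\bmA^{\top}\diag\big(\tfrac{\bmg}{(\bmA\bmf+\bmgamma)^{2}}\big)\bmA$; under the standing assumption that $\bmgamma$ has strictly positive entries, on $\bbRpd$ --- the effective domain, enforced by $\iota$, on which the APPGA iterates stay by \eqref{cloformproiota} --- we have $(\bmA\bmf)_i+\gamma_i\geqs\gamma_i>0$ for every $i$, so $\nabla^2F(\bmf)\preceq\bmA^{\top}\diag\big(\tfrac{\bmg}{\bmgamma^{2}}\big)\bmA$, giving the uniform bound $L_{F}:=\big\|\bmA^{\top}\diag(\tfrac{\bmg}{\bmgamma^{2}})\bmA\big\|_2$ on $\|\nabla^2F(\bmf)\|_2$ and hence $L_F$-Lipschitzness of $\nabla F$ there (the same estimate as for the fidelity term in \cite{lin2019krasnoselskii}). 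Summing the two pieces, $\phi=F+\tvarphi\circ\bmB$ is convex with Lipschitz-continuous gradient. The main --- essentially the only --- delicate point is this Lipschitz estimate for $\nabla F$: unlike the globally smooth term $\tvarphi\circ\bmB$, the fidelity $F$ is not smooth on all of $\bbRd$, so one must invoke positivity of $\bmgamma$ together with the nonnegativity constraint and be explicit about the set on which the bound holds and that it contains all APPGA iterates; the remaining manipulations are routine.
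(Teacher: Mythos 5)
Your proof is correct, and for the key step it takes a genuinely different route from the paper. The paper likewise reduces the lemma to the regularity of $s_\epsilon$ (citing \cite{krol2012preconditioned} for the fidelity term $F$ and leaving the separable-sum and linear-composition bookkeeping implicit), but it then establishes convexity of $s_\epsilon$ by a direct first-order-condition argument with a case analysis on whether $\bmx,\bmy$ lie inside or outside the ball $\{\bmx:\|\bmx\|_2\leqs\epsilon\}$, and proves Lipschitz continuity of $\nabla s_\epsilon$ by an explicit estimate yielding the constant $\tfrac{2}{\epsilon}$. Your identification of $s_\epsilon$ as the Moreau envelope of $\|\cdot\|_2$ with parameter $\epsilon$, verified via block soft-thresholding, is slicker: it delivers convexity, the gradient formula \eqref{gradenvL2}, and the sharper Lipschitz constant $\tfrac{1}{\epsilon}$ in one stroke from standard envelope theory, at the price of invoking that machinery rather than staying elementary. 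You are also more careful than the paper about the fidelity term: the paper simply cites a reference, whereas you spell out the Hessian bound $\bmA^{\top}\diag\big(\bmg/\bmgamma^{2}\big)\bmA$ and, importantly, note that this requires $\bmgamma$ to have strictly positive entries and only holds on $\bbRpd$ (where the iterates remain by \eqref{cloformproiota}), since $F$ is not smooth on all of $\bbRd$ --- a point the lemma's statement glosses over. Both arguments are sound; yours buys a better constant and an honest accounting of the domain of validity, while the paper's keeps the $s_\epsilon$ analysis self-contained.
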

\begin{proof}
It follows from \cite{krol2012preconditioned} that the first term $F$ in $\phi$ is a convex function with a Lipschitz continuous gradient. To prove this lemma, it suffices to verify that $\tvarphi$ is a convex function with a Lipschitz continuous gradient, which is true as long as $s_{\epsilon}$ is a convex function with a Lipschitz continuous gradient.

We first prove the convexity of $s_{\epsilon}$. According to the first order condition for convexity (see Proposition B.3 in \cite{bertsekas1999nonlinprog}), it suffices to show that
\begin{equation}\label{neq:foconvseps}
s_{\epsilon}(\bmy)\geqs s_{\epsilon}(\bmx)+\langle\nabla s_{\epsilon}(\bmx),\bmy-\bmx\rangle
\end{equation}
holds for all $\bmx,\bmy\in\bbRn$. Note that function $\psi_1(\bmx):=\|\bmx\|_2-\frac{\epsilon}{2}$ and $\psi_2(\bmx):=\frac{1}{2\epsilon}\|\bmx\|_2^2$ are both convex on $\bbRn$. Let $\Omega:=\{\bmx\in\bbRn:\|\bmx\|_2\leqs\epsilon\}$. Then the first order condition for convexity together with the definition of $s_\epsilon$ in \eqref{def:smoothL2} implies that \eqref{neq:foconvseps} holds for all $\bmx,\bmy\in\Omega$ and all $\bmx,\bmy\in\bbRn\backslash\Omega$. For the case $\bmx\in\Omega$ and $\bmy\in\bbRn\backslash\Omega$, since
$$
\|\bmy\|_2-\frac{1}{\epsilon}\|\bmx\|_2\|\bmy\|_2\geqs\left(1-\frac{1}{\epsilon}\|\bmx\|_2\right)\epsilon=\epsilon-\|\bmx\|_2,
$$
we have
\begin{align*}
&\|\bmy\|_2-\frac{\epsilon}{2}-\frac{1}{2\epsilon}\|\bmx\|_2^2-\left\langle\frac{\bmx}{\epsilon},\bmy-\bmx\right\rangle\\
\geqs&\|\bmy\|_2-\frac{1}{\epsilon}\langle\bmx,\bmy\rangle+\frac{1}{2\epsilon}\|\bmx\|_2^2-
\frac{\epsilon}{2}\\
\geqs&\|\bmy\|_2-\frac{1}{\epsilon}\|\bmx\|_2\|\bmy\|_2+\frac{1}{2\epsilon}\|\bmx\|_2^2-
\frac{\epsilon}{2}\\
\geqs&\frac{\epsilon}{2}-\|\bmx\|_2+\frac{1}{2\epsilon}\|\bmx\|_2^2\\
\geqs&\frac{1}{2\epsilon}\left(\epsilon-\|\bmx\|_2\right)^2\geqs0,
\end{align*}
which implies that \eqref{neq:foconvseps} holds. Similarly, we can also prove that \eqref{neq:foconvseps} holds for the case $\bmx\in\bbRn\backslash\Omega$ and $\bmy\in\Omega$. Therefore, $s_\epsilon$ is a convex function.

We next prove the Lipschitz continuity of $\nabla s_{\epsilon}$. For $\bmx,\bmy\in\bbRn$, we let $a:=\max\{\|\bmx\|_2,\epsilon\}$, $b:=\max\{\|\bmy\|_2,\epsilon\}$. Then it is easy to verify that $|a-b|\leqs\|\bmx-\bmy\|_2$, which together with the facts $\frac{\|\bmy\|_2}{b}\leqs 1$ and $a\geqs\epsilon$, yields the relation
$$
\left\|\frac{\bmy}{a}-\frac{\bmy}{b}\right\|_2=\frac{|b-a|}{ab}\|\bmy\|_2\leqs\frac{|b-a|}{a}\leqs\frac{\|\bmx-\bmy\|_2}{\epsilon}.
$$
Now we have
\begin{align*}
\|\nabla s_\epsilon(\bmx)-\nabla s_\epsilon(\bmy)\|_2=&\left\|\frac{\bmx}{a}-\frac{\bmy}{b}\right\|_2\\
\leqs&\left\|\frac{\bmx}{a}-\frac{\bmy}{a}\right\|_2+\left\|\frac{\bmy}{a}-\frac{\bmy}{b}\right\|_2\\
\leqs&\frac{2}{\epsilon}\|\bmx-\bmy\|_2,
\end{align*}
which completes the proof.
\end{proof}

We then recall the well-known descent lemma (Proposition A.24 of \cite{bertsekas2009convex}), and use it together with Lemma \ref{lem:phiconvLip} to prove Lemma \ref{lem:neqPhizPhiy}.

\begin{lemma}[\bf{Descent lemma}]\label{lem:desclem}
Let $\psi:\bbRn\to\bbR$ be differentiable with an $L$-Lipschitz continuous gradient, where $L>0$. Then $\psi(\bmy)\leqs\psi(\bmx)+\langle\nabla\psi(\bmx),\bmy-\bmx\rangle+\frac{L}{2}\|\bmy-\bmx\|_2^{2}$, for all $\bmx,\bmy\in\bbRn$.
\end{lemma}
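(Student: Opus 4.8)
The plan is to obtain the quadratic upper bound directly from the integral form of the fundamental theorem of calculus along the segment joining $\bmx$ and $\bmy$, combined with the Lipschitz estimate on $\nabla\psi$.

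First I would fix $\bmx,\bmy\in\bbRn$ and introduce the auxiliary scalar function $g:[0,1]\to\bbR$ defined by $g(t):=\psi(\bmx+t(\bmy-\bmx))$. Since $\psi$ is differentiable with a continuous (indeed Lipschitz) gradient, the chain rule gives that $g$ is continuously differentiable with $g'(t)=\langle\nabla\psi(\bmx+t(\bmy-\bmx)),\bmy-\bmx\rangle$, so the fundamental theorem of calculus yields
$$
\psi(\bmy)-\psi(\bmx)=g(1)-g(0)=\int_0^1\langle\nabla\psi(\bmx+t(\bmy-\bmx)),\bmy-\bmx\rangle\,dt.
$$
Next I would subtract $\langle\nabla\psi(\bmx),\bmy-\bmx\rangle=\int_0^1\langle\nabla\psi(\bmx),\bmy-\bmx\rangle\,dt$ from both sides to get
$$
\psi(\bmy)-\psi(\bmx)-\langle\nabla\psi(\bmx),\bmy-\bmx\rangle=\int_0^1\langle\nabla\psi(\bmx+t(\bmy-\bmx))-\nabla\psi(\bmx),\bmy-\bmx\rangle\,dt,
$$
and then bound the integrand using the Cauchy--Schwarz inequality together with the $L$-Lipschitz continuity of $\nabla\psi$, namely $\|\nabla\psi(\bmx+t(\bmy-\bmx))-\nabla\psi(\bmx)\|_2\leqs L\,\|t(\bmy-\bmx)\|_2=Lt\|\bmy-\bmx\|_2$. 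Substituting this estimate and evaluating $\int_0^1 Lt\,dt=\tfrac{L}{2}$ gives the asserted inequality $\psi(\bmy)\leqs\psi(\bmx)+\langle\nabla\psi(\bmx),\bmy-\bmx\rangle+\tfrac{L}{2}\|\bmy-\bmx\|_2^2$.

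There is essentially no obstacle here: the statement is classical and the argument is short. The only point worth a word of care is the legitimacy of the fundamental theorem of calculus step, which is guaranteed because $t\mapsto\nabla\psi(\bmx+t(\bmy-\bmx))$ is continuous (Lipschitz gradients are continuous), so $g'$ is continuous on $[0,1]$; alternatively, since the paper merely \emph{recalls} this fact, one may simply cite Proposition A.24 of \cite{bertsekas2009convex} and omit the derivation altogether.
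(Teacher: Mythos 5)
Your argument is correct and complete: the integral form of the fundamental theorem of calculus along the segment, followed by Cauchy--Schwarz and the Lipschitz bound $\|\nabla\psi(\bmx+t(\bmy-\bmx))-\nabla\psi(\bmx)\|_2\leqs Lt\|\bmy-\bmx\|_2$, is exactly the standard proof of the descent lemma. The paper itself offers no proof of this statement --- it merely recalls the result and cites Proposition A.24 of the referenced text --- so there is nothing to compare against beyond noting that your derivation is the classical one given in that reference, and your closing remark that one may simply cite the source is consistent with what the authors actually do.
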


\begin{lemma}\label{lem:neqPhizPhiy}
For all $\bmx,\bmy\in\bbRd$, let $\bmz:=\mcT\bmy$. If $p_{\max}\leqs\frac{1}{L_{\phi}}$, then
$$
\Phi(\bmz)\leqs\Phi(\bmx)+\langle\bmy-\bmx,\bmy-\bmz\rangle_{\bmP^{-1}}-\frac{1}{2}\|\bmy-\bmz\|_{\bmP^{-1}}^2.
$$
\end{lemma}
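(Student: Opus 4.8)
The plan is to combine the Descent Lemma applied to $\phi$ with the subdifferential inequality for $\iota$ at the proximal point $\bmz=\mcT\bmy$. First I would unwind the definition of $\mcT$ in \eqref{defmT}: since $\bmz=\prox_{\iota,\bmP^{-1}}(\bmy-\bmP\nabla\phi(\bmy))$, the optimality condition for the proximal subproblem \eqref{def:prox} gives
$$
\bmP^{-1}\bigl((\bmy-\bmP\nabla\phi(\bmy))-\bmz\bigr)\in\partial\iota(\bmz),
$$
that is, $\bmP^{-1}(\bmy-\bmz)-\nabla\phi(\bmy)\in\partial\iota(\bmz)$. By the definition of the subdifferential, for every $\bmx\in\bbRd$,
$$
\iota(\bmx)\geqs\iota(\bmz)+\langle\bmP^{-1}(\bmy-\bmz)-\nabla\phi(\bmy),\,\bmx-\bmz\rangle .
$$

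Next I would control $\phi(\bmz)$. Since $p_{\max}\leqs 1/L_\phi$, we have $\bmP^{-1}\succeq L_\phi\bmI$ in the Loewner order, hence $\frac{L_\phi}{2}\|\bmy-\bmz\|_2^2\leqs\frac12\|\bmy-\bmz\|_{\bmP^{-1}}^2$. Applying the Descent Lemma (Lemma \ref{lem:desclem}) to $\phi$, which is differentiable with $L_\phi$-Lipschitz gradient by Lemma \ref{lem:phiconvLip}, at the pair $(\bmy,\bmz)$ gives
$$
\phi(\bmz)\leqs\phi(\bmy)+\langle\nabla\phi(\bmy),\bmz-\bmy\rangle+\tfrac{1}{2}\|\bmy-\bmz\|_{\bmP^{-1}}^2 .
$$
Separately, convexity of $\phi$ gives $\phi(\bmy)\leqs\phi(\bmx)+\langle\nabla\phi(\bmy),\bmy-\bmx\rangle$ for any $\bmx$. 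I would then add the three inequalities — the descent bound for $\phi(\bmz)$, the convexity bound for $\phi(\bmy)$, and the subgradient bound for $\iota(\bmx)$ rearranged as a lower bound on $\iota(\bmz)$ — so that $\Phi(\bmz)=\phi(\bmz)+\iota(\bmz)$ appears on the left and $\Phi(\bmx)=\phi(\bmx)+\iota(\bmx)$ on the right. The gradient terms $\langle\nabla\phi(\bmy),\cdot\rangle$ are designed to telescope: $\langle\nabla\phi(\bmy),\bmz-\bmy\rangle+\langle\nabla\phi(\bmy),\bmy-\bmx\rangle-\langle\nabla\phi(\bmy),\bmx-\bmz\rangle=0$, so all explicit $\nabla\phi$ contributions cancel.

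What remains after the cancellation is
$$
\Phi(\bmz)\leqs\Phi(\bmx)+\langle\bmP^{-1}(\bmy-\bmz),\,\bmz-\bmx\rangle+\tfrac12\|\bmy-\bmz\|_{\bmP^{-1}}^2 ,
$$
and the final step is a purely algebraic identity in the $\bmP^{-1}$-inner product: writing $\bmz-\bmx=(\bmy-\bmx)-(\bmy-\bmz)$ gives $\langle\bmy-\bmz,\bmz-\bmx\rangle_{\bmP^{-1}}=\langle\bmy-\bmz,\bmy-\bmx\rangle_{\bmP^{-1}}-\|\bmy-\bmz\|_{\bmP^{-1}}^2$, and substituting turns the bound into exactly $\Phi(\bmz)\leqs\Phi(\bmx)+\langle\bmy-\bmx,\bmy-\bmz\rangle_{\bmP^{-1}}-\tfrac12\|\bmy-\bmz\|_{\bmP^{-1}}^2$. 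The only place requiring care — the "main obstacle," though it is mild — is making the change of inner product rigorous: the Descent Lemma is stated in the standard Euclidean norm, so I must use the spectral inequality $p_{\max}\leqs 1/L_\phi\Rightarrow L_\phi\|\cdot\|_2^2\leqs\|\cdot\|_{\bmP^{-1}}^2$ to pass from the $\frac{L_\phi}{2}\|\cdot\|_2^2$ term to the $\frac12\|\cdot\|_{\bmP^{-1}}^2$ term, and I must invoke the proximity-operator/subdifferential correspondence (Proposition 2.6 of \cite{micchelli2011proximity}, already used in the proof of Proposition \ref{prop:FPchar}) in its $\bmP^{-1}$-weighted form to get the subgradient inclusion above. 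Everything else is bookkeeping.
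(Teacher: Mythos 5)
Your proposal is correct and follows essentially the same route as the paper's proof: the descent lemma upgraded to the $\bmP^{-1}$-norm via $p_{\max}\leqs 1/L_{\phi}$, the optimality (Fermat/subdifferential) condition for the proximal subproblem giving $\bmP^{-1}(\bmy-\bmz)-\nabla\phi(\bmy)\in\partial\iota(\bmz)$, the convexity inequalities for $\phi$ and $\iota$, and the same final algebraic rearrangement in the $\bmP^{-1}$-inner product. The only cosmetic difference is that the paper additionally notes $\iota(\bmz)=0$ (since $\bmz\in\bbR_+^d$) to pass from $\phi(\bmz)$ to $\Phi(\bmz)$, whereas you carry $\iota(\bmz)$ through the subgradient inequality, which works equally well.
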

\begin{proof}
Since $p_{\max}\leqs\frac{1}{L_{\phi}}$, we have $\|\bmz-\bmy\|_{\bmP^{-1}}^{2}\geqs L_{\phi}\|\bmz-\bmy\|_{2}^{2}$. Then it follows from Lemma \ref{lem:phiconvLip} and \ref{lem:desclem} that
\begin{equation}\label{neq:phizphiy}
\phi(\bmz)\leqs\phi(\bmy)+\langle\nabla\phi(\bmy),\bmz-\bmy\rangle+\frac{1}{2}\|\bmz-\bmy\|_{\bmP^{-1}}^{2}.
\end{equation}
Since $\bmz=\prox_{\iota,\bmP^{-1}}\left(\bmy-\bmP\nabla\phi(\bmy)\right)$, by Equation \eqref{cloformproiota}, we know that $\bmz\in\bbR_+^d$, and hence $\iota(\bmz)=0$. Then \eqref{neq:phizphiy} gives that
\begin{equation}\label{neq:Phizphix}
\Phi(\bmz)\leqs\phi(\bmy)+\langle\nabla\phi(\bmy),\bmz-\bmy\rangle+\frac{1}{2}\|\bmz-\bmy\|_{\bmP^{-1}}^{2}.
\end{equation}
In addition, $\bmz$ is a minimizer of function $\varphi$ defined by
$$
\varphi(\bmu):=\frac{1}{2}\|\bmu-(\bmy-\bmP\nabla\phi(\bmy))\|_{\bmP^{-1}}^{2}+\iota(\bmu),\ \ \bmu\in\bbRd.
$$
By Fermat's rule, there exists $\bmq\in\partial\iota(\bmz)$ such that
\begin{equation}\label{Pinvzxgradphi}
\bmP^{-1}(\bmz-\bmy)+\nabla\phi(\bmy)+\bmq=\bm{0}.
\end{equation}
Note that $\phi$, $\iota$ are both convex. We have
\begin{align*}
&\phi(\bmx)\geqs\phi(\bmy)+\langle\nabla\phi(\bmy),\bmx-\bmy\rangle,\\
&\iota(\bmx)\geqs\iota(\bmz)+\langle\bmx-\bmz,\bmq\rangle.
\end{align*}
Summing the above two inequalities yields
\begin{equation}\label{neq:Phiygeqphix}
\Phi(\bmx)\geqs\phi(\bmy)+\langle\nabla\phi(\bmy),\bmx-\bmy\rangle+\langle\bmx-\bmz,\bmq\rangle.
\end{equation}
Now by combining the two inequalities \eqref{neq:Phizphix} and \eqref{neq:Phiygeqphix}, and then using Equation \eqref{Pinvzxgradphi}, we obtain that
\begin{align*}
\Phi(\bmx)-\Phi(\bmz)&\geqs\langle\nabla\phi(\bmy),\bmx-\bmz\rangle+\langle\bmx-\bmz,\bmq\rangle-\frac{1}{2}\|\bmz-\bmy\|_{\bmP^{-1}}^{2}\\
&=\langle\bmx-\bmz,\nabla\phi(\bmy)+\bmq\rangle-\frac{1}{2}\|\bmz-\bmy\|_{\bmP^{-1}}^{2}\\
&=\langle\bmz-\bmx,\bmP^{-1}(\bmz-\bmy)\rangle-\frac{1}{2}\|\bmz-\bmy\|_{\bmP^{-1}}^{2}\\
&=\frac{1}{2}\|\bmz-\bmy\|_{\bmP^{-1}}^2+\langle\bmy-\bmx,\bmz-\bmy\rangle_{\bmP^{-1}},
\end{align*}
which implies the desired result.
\end{proof}

\begin{proof}[Proof of Proposition \ref{prop1forconverg}]
We first prove Item $(i)$. For $k\in\bbN_+$, by letting $\bmx=\fk$, $\bmy=\tfk$ and $\bmx=\bmf^*$, $\bmy=\tfk$, respectively, in Lemma \ref{lem:neqPhizPhiy}, and defining $\bmw^k:=\tfk-\fkk$, we have
\begin{equation}\label{neq:PhifkkPhifk}
\Phi(\fkk)\leqs\Phi(\fk)+\langle\tfk-\fk,\bmw^k\rangle_{\bmP^{-1}}-\frac{1}{2}\left\|\bmw^k\right\|_{\bmP^{-1}}^2,
\end{equation}
\begin{equation}\label{neq:PhifkkPhifstar}
\Phi(\fkk)\leqs\Phi(\bmf^*)+\langle\tfk-\bmf^*,\bmw^k\rangle_{\bmP^{-1}}-\frac{1}{2}\left\|\bmw^k\right\|_{\bmP^{-1}}^2.
\end{equation}
Let
$$
p_k:= 2t_k\left\langle\bmh^k-\bmf^*,\bmw^k\right\rangle_{\bmP^{-1}}-t_k^2\left\|\bmw^k\right\|_{\bmP^{-1}}^2,
$$
where $\bmh^k$ is defined by \eqref{defhk}. The combination $\left(1-\frac{1}{t_k}\right)\cdot$ \eqref{neq:PhifkkPhifk}+$\frac{1}{t_k} \cdot$\eqref{neq:PhifkkPhifstar} gives
$$
\Phi(\fkk)\leqs\left(1-\frac{1}{t_k}\right)\Phi(\fk)+\frac{1}{t_k}\Phi(\bmf^*)+\frac{1}{2t_k^2}p_k,
$$
that is,
\begin{equation}\label{neq:etakketakpk}
\eta_{k+1}\leqs\left(1-\frac{1}{t_k}\right)\eta_k+\frac{1}{2t_k^2}p_k,\ \ \mbox{for all}\ k\in\bbN_+.
\end{equation}

It is necessary to verify that the equality
\begin{equation}\label{eq:hkkhktfkfkk}
\bmh^{k+1}=\bmh^k-t_k\bmw^k,\ \ k\in\bbN_+
\end{equation}
holds true. Substituting $\tilde{\bmf}^{k+1}=\fkk+\theta_{k+1}(\fkk-\fk)$ into the definition of $\bmh^{k+1}$ in \eqref{defhk}, and then using the facts $t_{k+1}\theta_{k+1}=t_k-1$ and $(1-t_k)\fk=\bmh^k-t_k\tfk$, we get that
\begin{align}
\bmh^{k+1}&=t_{k+1}\left[ \fkk + \theta_{k+1}(\fkk-\fk)\right] + (1-t_{k+1})\fkk \nonumber\\
&=(1+t_{k+1}\theta_{k+1})\fkk-t_{k+1}\theta_{k+1}\fk\nonumber\\
\label{eq:hkkfkkfk}&=t_k\fkk+(1-t_k)\fk\\
&=t_k\fkk+\bmh^k-t_k\tfk,\nonumber
\end{align}
which implies \eqref{eq:hkkhktfkfkk}. Since
$$
p_k=\left\|\bmh^k-\bmf^*\right\|_{\bmP^{-1}}^2-\left\|(\bmh^k-\bmf^*)-t_k\bmw^k\right\|_{\bmP^{-1}}^2,
$$
it follows from \eqref{eq:hkkhktfkfkk} that
\begin{equation}\label{eq:pkhkfstar}
p_k=q_k-q_{k+1},
\end{equation}
where $q_k:=\left\|\bmh^k-\bmf^*\right\|_{\bmP^{-1}}^2$, $k\in\bbN_+$. Substituting \eqref{eq:pkhkfstar} into \eqref{neq:etakketakpk} yields
\begin{equation*}
\eta_{k+1}\leqs\left(1-\frac{1}{t_k}\right)\eta_k+\frac{1}{2 t_k^2}\left(q_k-q_{k+1}\right).
\end{equation*}
Multiplying both sides of the above inequality by $2t_k^2$ gives
\begin{align*}
2t_k^2 \eta_{k+1}&\leqs 2t_k(t_k-1)\eta_k+q_k-q_{k+1}\\
&=2t_{k-1}^2\eta_k+q_k-q_{k+1}-2\left[t_{k-1}^2-t_k(t_k-1)\right]\eta_k,
\end{align*}
that is,
\begin{equation}\label{neq:epskkepsk}
\varepsilon_{k+1}+2\left[t_{k-1}^2-t_k(t_k-1)\right]\eta_k\leqs\varepsilon_k.
\end{equation}
Since $\eta_k\geqs0$ and there exists $K\in\bbN_+$ such that $t_k(t_k-1)\leqs t_{k-1}^2$ for all $k>K$, Item $(i)$ follows from \eqref{neq:epskkepsk} immediately.

According to the definition of $\varepsilon_k$ and Item $(i)$, we obtain that $2 t_{k-1}^2\eta_k\leqs\varepsilon_k\leqs\varepsilon_K$ for all $k>K$, which implies Item $(ii)$. In addition, there exists $\varepsilon^*\geqs0$ such that $\lim_{k\to\infty}\varepsilon_k=\varepsilon^*$. Summing \eqref{neq:epskkepsk} for $k=1,2,\ldots,K'$ and letting $K'\to+\infty$ yield $2\sum_{k=1}^{\infty}\left[t_{k-1}^2-t_k(t_k-1)\right]\eta_k\leqs\varepsilon_1-\varepsilon^*\leqs\varepsilon_1$, which implies Item $(iii)$.
\end{proof}

To obtain the convergence and convergence rate results of APPGA, we need some hypotheses on the momentum parameters. We recall the Momentum-Condition proposed in \cite{lin2023convergence}. For a sequence $\{t_k\}_{k\in\bbN}\subset\bbR$, we say that it satisfies the {\bf Momentum-Condition} if the following hypotheses are satisfied:
\begin{itemize}
\item[$(i)$] $t_k\neq0$ for all $k\in\bbN_+$.
\item[$(ii)$] There exist $\rho\in\bbR_+$ and $K_1\in\bbN_+$ such that
\begin{equation}\label{neq:momencond2}
\hspace{-0.5em}1\leqs t_{k-1}<\rho\left[t_{k-1}^2-t_k(t_k-1)\right],\ \mbox{for all}\ k>K_1.
\end{equation}
\item[$(iii)$] There exist $c_1, c_2\in\bbR_+$ and $K_2\in\bbN_+$ such that
\begin{equation}\label{neq:momencond3}
c_1t_k\leqs t_{k-1}\leqs c_2t_k,\ \mbox{for all}\ k>K_2.
\end{equation}
\item[$(iv)$] $\lim_{k\to \infty}t_k=+\infty$ and $\sum_{k=1}^\infty\frac{1}{t_k}=+\infty$.
\end{itemize}

We now establish the boundedness of the two series $\sum_{k=1}^\infty t_{k-1}\eta_k$ and $\sum_{k=1}^\infty t_{k-1}\tau_k$.

\begin{prop}\label{prop:boundetaktauk}
Let $\theta_k=\frac{t_{k-1}-1}{t_k}$, $k\in\bbN_+$, where $\left\{t_k\right\}_{k\in\bbN}\subset\bbR$ satisfies Item $(i)-(iii)$ of Momentum-Condition. If $p_{\max}\leqs\frac{1}{L_{\phi}}$, then $\sum_{k=1}^\infty t_{k-1}\eta_k<+\infty$ and $\sum_{k=1}^\infty t_{k-1}\tau_k<+\infty$.
\end{prop}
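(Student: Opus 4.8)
The plan is to extract the two summability claims from the already-established recursion \eqref{neq:epskkepsk} in Proposition \ref{prop1forconverg}, combined with the bounds guaranteed by Items $(ii)$ and $(iii)$ of the Momentum-Condition. First I would recall from Proposition \ref{prop1forconverg}$(iii)$ (which only needs $t_k(t_k-1)\leqs t_{k-1}^2$ for large $k$, a weaker hypothesis implied by \eqref{neq:momencond2}) that $\sum_{k=1}^\infty\left[t_{k-1}^2-t_k(t_k-1)\right]\eta_k<+\infty$. Then Item $(ii)$ of the Momentum-Condition, inequality \eqref{neq:momencond2}, gives $t_{k-1}<\rho\left[t_{k-1}^2-t_k(t_k-1)\right]$ for all $k>K_1$, so $t_{k-1}\eta_k<\rho\left[t_{k-1}^2-t_k(t_k-1)\right]\eta_k$ for such $k$; summing and adding the finitely many terms with $k\leqs K_1$ yields $\sum_{k=1}^\infty t_{k-1}\eta_k<+\infty$.

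For the second series, the natural route is to revisit the telescoped inequality \eqref{neq:epskkepsk}, but to retain more information about the $\bmw^k$ term that was discarded when passing to $\varepsilon_k$. Concretely, I would go back to \eqref{neq:etakketakpk} and \eqref{eq:pkhkfstar}, but instead of only using $p_k=q_k-q_{k+1}$, also keep track of the $-t_k^2\|\bmw^k\|_{\bmP^{-1}}^2$ contribution. Recalling $\bmw^k=\tfk-\fkk$ and noting $\|\tfk-\fkk\|_{\bmP^{-1}}^2$ is comparable to $\tau_{k+1}=\frac12\|\fkk-\fk\|_{\bmP^{-1}}^2$ only after accounting for the momentum step $\tfk-\fk=\theta_k(\fk-\bmf^{k-1})$; here I would use the identity $\fkk-\fk=(\fkk-\tfk)+(\tfk-\fk)=-\bmw^k+\theta_k(\fk-\bmf^{k-1})$ together with $\theta_k=\frac{t_{k-1}-1}{t_k}$ and Item $(iii)$'s bound $c_1 t_k\leqs t_{k-1}\leqs c_2 t_k$ (so $\theta_k$ stays bounded) to dominate $\tau_{k+1}$ by a constant multiple of $\|\bmw^k\|_{\bmP^{-1}}^2+\tau_k$. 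Plugging this back and multiplying through by appropriate powers of $t_k$, the descent inequality produces a term of the form $c\,t_{k-1}\|\bmw^k\|_{\bmP^{-1}}^2$ on the left that telescopes against the $\varepsilon_k$ sequence, whose limit exists and is finite by Proposition \ref{prop1forconverg}$(i)$.

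More precisely, I would aim for an inequality of the shape
\[
\varepsilon_{k+1}+c\,t_{k-1}\left\|\bmw^k\right\|_{\bmP^{-1}}^2\leqs\varepsilon_k+(\text{summable correction}),
\]
where the correction absorbs the cross terms via Item $(ii)$–$(iii)$ and the already-proven summability of $\sum t_{k-1}\eta_k$; summing over $k$ and using $\varepsilon_k\to\varepsilon^*\geqs0$ then gives $\sum_{k=1}^\infty t_{k-1}\|\bmw^k\|_{\bmP^{-1}}^2<+\infty$. Finally, translating from $\bmw^k$ to $\fkk-\fk$ using the decomposition above, Young's inequality, and the boundedness of $\theta_k$, I would write $t_{k-1}\tau_{k+1}\leqs c'\,t_{k-1}\|\bmw^k\|_{\bmP^{-1}}^2+c'\,\theta_k^2\,t_{k-1}\tau_k$; since $t_{k-1}/t_{k-2}$ is bounded by Item $(iii)$, the last term is a bounded multiple of $t_{k-2}\tau_k$, so this is a shifted-index inequality that, chained with the summability of $\sum t_{k-1}\|\bmw^k\|_{\bmP^{-1}}^2$, closes the argument for $\sum_{k=1}^\infty t_{k-1}\tau_k<+\infty$.

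The main obstacle I anticipate is bookkeeping the cross terms correctly when re-deriving the sharper descent inequality: one must keep the $-\frac12\|\bmw^k\|_{\bmP^{-1}}^2$ terms from \eqref{neq:PhifkkPhifk}–\eqref{neq:PhifkkPhifstar} through the convex combination and the multiplication by $2t_k^2$ without letting them be swallowed into $\varepsilon_k$, and simultaneously control the momentum-induced term $\theta_k(\fk-\bmf^{k-1})$ so that it is genuinely dominated by quantities already known to be summable. The Momentum-Condition Items $(ii)$ and $(iii)$ are exactly what make this possible — $(ii)$ converts the telescoping gap into $t_{k-1}\eta_k$, and $(iii)$ keeps all the ratios $t_{k-1}/t_k$ and hence $\theta_k$ bounded — so the argument is a careful but routine combination once the right intermediate inequality is written down.
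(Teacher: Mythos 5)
Your argument for $\sum_{k=1}^\infty t_{k-1}\eta_k<+\infty$ is exactly the paper's: apply \eqref{neq:momencond2} termwise and invoke Item $(iii)$ of Proposition \ref{prop1forconverg}. The second half, however, has a genuine gap, and it starts with the premise that there is a leftover $-t_k^2\|\bmw^k\|_{\bmP^{-1}}^2$ contribution to recover from the $\varepsilon_k$ recursion. There is none: in the convex combination $\left(1-\frac{1}{t_k}\right)\cdot\eqref{neq:PhifkkPhifk}+\frac{1}{t_k}\cdot\eqref{neq:PhifkkPhifstar}$ the inner-product terms collapse to $\frac{1}{t_k}\langle\bmh^k-\bmf^*,\bmw^k\rangle_{\bmP^{-1}}$, and together with the full $-\frac12\|\bmw^k\|_{\bmP^{-1}}^2$ they form precisely $\frac{1}{2t_k^2}p_k$ with $p_k=q_k-q_{k+1}$ as an exact completed square. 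So the $\|\bmw^k\|^2$ term is entirely consumed in producing the telescoping, and the only slack in \eqref{neq:epskkepsk} is the $\eta_k$ gap you already spent on the first series. Your proposed source for $\sum t_{k-1}\|\bmw^k\|_{\bmP^{-1}}^2<+\infty$ therefore does not exist (under $p_{\max}\leqs 1/L_\phi$ there is also no quantifiable surplus hiding in Lemma \ref{lem:neqPhizPhiy} itself).

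Even granting that summability, the closing step fails quantitatively. Your chained inequality $t_{k-1}\tau_{k+1}\leqs c'\,t_{k-1}\|\bmw^k\|_{\bmP^{-1}}^2+c'\,\theta_k^2\,t_{k-1}\tau_k$ has $c'\geqs 2$ from Young's inequality while $\theta_k\to 1$, so the recursion coefficient on the $\tau$ term tends to $2>1$ and permits geometric growth; no conclusion about $\sum t_{k-1}\tau_k$ follows. The margin here is razor-thin: the paper takes only \eqref{neq:PhifkkPhifk} (i.e., Lemma \ref{lem:neqPhizPhiy} with $\bmx=\fk$, $\bmy=\tfk$) and completes the square \emph{exactly}, using $\langle\bmd^k,\bmw^k\rangle_{\bmP^{-1}}-\frac12\|\bmw^k\|_{\bmP^{-1}}^2=\frac12\|\bmd^k\|_{\bmP^{-1}}^2-\frac12\|\fk-\fkk\|_{\bmP^{-1}}^2$, to obtain $\eta_{k+1}+\tau_{k+1}\leqs\eta_k+\theta_k^2\tau_k$ with coefficient exactly $\theta_k^2$. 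Multiplying by $t_k^2$ turns that coefficient into $(t_{k-1}-1)^2=t_{k-1}^2-(2t_{k-1}-1)$, and the additive gap $2t_{k-1}-1\geqs t_{k-1}$ is precisely the quantity being summed; any multiplicative loss of the form $(1+\delta)\theta_k^2$ adds a term $\delta\,t_{k-1}^2\tau_k$ that swamps it. The correct repair of your argument is therefore not Young's inequality but the exact identity, followed by the paper's telescoping of $t_k^2\tau_{k+1}-t_{k-1}^2\tau_k$ against $t_k^2\eta_k-t_{k+1}(t_{k+1}-1)\eta_{k+1}$ using Items $(ii)$--$(iii)$ and the already-proved summability of $\sum t_{k-1}\eta_k$.
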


\begin{proof}
Multiplying both sides of the second inequality of \eqref{neq:momencond2} by $\eta_k$ and summing the resulting inequality for $k$ from $K_1+1$ to infinity yields
$$
\sum_{k=K_1+1}^\infty t_{k-1}\eta_k\leqs\rho\sum_{k=K_1+1}^\infty \left[t_{k-1}^2-t_k(t_k-1)\right]\eta_k,
$$
which implies the boundedness of $\sum_{k=1}^\infty t_{k-1}\eta_k$ according to Item $(iii)$ in Proposition \ref{prop1forconverg}.

We then prove the boundedness of $\sum_{k=1}^\infty t_{k-1}\tau_k$. To this end, we show that
\begin{equation}\label{neq:etakplustauk}
\eta_{k+1}+\tau_{k+1}\leqs\eta_k+\theta_k^2\tau_k,\ \ \mbox{for all}\ k\in\bbN_+.
\end{equation}
From the proof of Proposition \ref{prop1forconverg}, we know that \eqref{neq:PhifkkPhifk} holds. Let $\bmd^k:=\theta_k(\fk-\bmf^{k-1})$, $k\in\bbN_+$. Then $\tfk=\fk+\bmd^k$. Substituting this into \eqref{neq:PhifkkPhifk} yields that
\begin{align*}
\Phi(\fkk)&\leqs\Phi(\fk)+\langle\bmd^k,\fk-\fkk+\bmd^k\rangle_{\bmP^{-1}}\\
&\hspace{1.2em}-\frac{1}{2}\left\|\fk-\fkk+\bmd^k\right\|_{\bmP^{-1}}^2\\
&=\Phi(\fk)-\frac{1}{2}\|\fk-\fkk\|_{\bmP^{-1}}^2-\frac{1}{2}\|\bmd^k\|_{\bmP^{-1}}^2,
\end{align*}
which implies \eqref{neq:etakplustauk}.

Multiplying  both sides of \eqref{neq:etakplustauk} by $t_k^2$ gives
$$
t_k^2(\eta_{k+1}+\tau_{k+1})\leqs t_k^2\eta_k+(t_{k-1}-1)^2\tau_k,
$$
that is,
\begin{equation}\label{neq:tauketak2}
\left(2t_{k-1}-1\right)\tau_k+t_k^2\tau_{k+1}-t_{k-1}^2\tau_k\leqs t_k^2 \left( \eta_k-\eta_{k+1}\right),
\end{equation}
for all $k\in\bbN_+$. Since $\left\{t_k\right\}_{k\in\bbN}$ satisfies Item $(ii)$ and $(iii)$ of Momentum-Condition, there exist $c>0$ and $K\in\bbN_+$ such that $t_{k-1}\geqs1$, $0<t_{k+1}\left(t_{k+1}-1\right)<t_k^2$ and $0<t_{k+1}\leqs ct_k$ for all $k>K$. These together with \eqref{neq:tauketak2} give that
\begin{align*}
t_{k-1}\tau_k+\left(t_k^2\tau_{k+1}-t_{k-1}^2 \tau_k\right)&\leqs t_k^2\eta_k-t_{k+1}\left(t_{k+1}-1\right)\eta_{k+1}\\
&\leqs\left(t_k^2\eta_k-t_{k+1}^2\eta_{k+1}\right)+ct_k\eta_{k+1},
\end{align*}
for all $k > K$. Summing the above inequality for $k=K+1,K+2,\ldots,M$, we obtain
\begin{align*}
\sum_{k=K+1}^M t_{k-1}\tau_k+t_M^2\tau_{M+1}-t_K^2\tau_{K+1}
\leqs t_{K+1}^2\eta_{K+1}-t_{M+1}^2\tau_{M+1}+c\sum_{k=K+1}^M t_k\eta_{k+1},
\end{align*}
which yields that
$$
\sum_{k=K+1}^M t_{k-1}\tau_k\leqs t_K^2\tau_{K+1}+t_{K+1}^2\eta_{K+1}+c\sum_{k=K+1}^M t_k\eta_{k+1}.
$$
Note that the boundedness of $\sum_{k=1}^\infty t_{k-1}\eta_k$ has already been proven. Now by letting $M\to+\infty$ in the above inequality, we find that $\sum_{k=K+1}^\infty t_{k-1}\tau_k<+\infty$, which implies the desired result.
\end{proof}

According to Proposition \ref{prop:boundetaktauk}, we are able to show that $\lim _{k\to\infty}t_{k-1}^2\left(\tau_k+\eta_k\right)=0$. Here we omit the proof for this result, since it is similar to the proof of Proposition 3.4 in \cite{lin2023convergence}.

\begin{prop}\label{prop:ttauetaktozero}
Let $\theta_k=\frac{t_{k-1}-1}{t_k}$, $k\in\bbN_+$, where $\left\{t_k\right\}_{k\in\bbN}\subset\bbR$ satisfies Momentum-Condition. If $p_{\max}\leqs\frac{1}{L_{\phi}}$, then $\lim _{k\to\infty}t_{k-1}^2\left(\tau_k+\eta_k\right)=0$.
\end{prop}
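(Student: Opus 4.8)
The plan is to work with the nonnegative sequence $a_k:=t_{k-1}^2(\tau_k+\eta_k)$ and establish two facts: first, that $a_k$ is eventually \emph{quasi-monotone}, i.e.\ $a_{k+1}\le a_k+r_k$ for all large $k$ with $\sum_k r_k<+\infty$, so that $\ell:=\lim_{k\to\infty}a_k$ exists and is finite; and second, that $\ell=0$, which is forced by combining the summability $\sum_k t_{k-1}(\tau_k+\eta_k)<+\infty$ from Proposition \ref{prop:boundetaktauk} with $\sum_k 1/t_{k-1}=+\infty$. Note $\sum_k t_{k-1}(\tau_k+\eta_k)=\sum_k a_k/t_{k-1}$, so a strictly positive limit of $a_k$ would make this series diverge.

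For the quasi-monotonicity I would start from inequality \eqref{neq:tauketak2}, which was already derived (for every $k$) in the proof of Proposition \ref{prop:boundetaktauk}; adding $t_k^2\eta_{k+1}$ to both sides rewrites it as
\[
t_k^2(\tau_{k+1}+\eta_{k+1})\le t_k^2\eta_k+t_{k-1}^2\tau_k-(2t_{k-1}-1)\tau_k .
\]
Now I would use Momentum-Condition $(ii)$: the relation \eqref{neq:momencond2} gives $t_{k-1}^2-t_k(t_k-1)>0$ for $k>K_1$, hence $t_k^2-t_{k-1}^2<t_k$, and since $\eta_k\ge0$ this yields $t_k^2\eta_k\le t_{k-1}^2\eta_k+t_k\eta_k$; the same item gives $t_{k-1}\ge1$, so $-(2t_{k-1}-1)\tau_k\le0$ and this term can simply be dropped. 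Therefore $a_{k+1}\le a_k+t_k\eta_k$ for all $k>K_1$. Finally, Momentum-Condition $(iii)$ ($c_1t_k\le t_{k-1}$) together with $\sum_k t_{k-1}\eta_k<+\infty$ from Proposition \ref{prop:boundetaktauk} gives $\sum_k t_k\eta_k\le\frac1{c_1}\sum_k t_{k-1}\eta_k<+\infty$; applying the standard fact that the eventually non-increasing, nonnegative sequence $a_k+\sum_{j\ge k}t_j\eta_j$ converges then shows that $\ell=\lim_{k\to\infty}a_k$ exists in $[0,+\infty)$.

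To rule out $\ell>0$ I would argue by contradiction: Momentum-Condition $(iii)$ ($t_{k-1}\le c_2t_k$) and $(iv)$ ($\sum_k 1/t_k=+\infty$) give $\sum_k 1/t_{k-1}\ge\frac1{c_2}\sum_k 1/t_k=+\infty$, so if $a_k\ge\ell/2$ for all large $k$ then $\sum_k a_k/t_{k-1}\ge\frac\ell2\sum_k 1/t_{k-1}=+\infty$, contradicting $\sum_k t_{k-1}(\tau_k+\eta_k)<+\infty$ from Proposition \ref{prop:boundetaktauk}. Hence $\ell=0$, i.e.\ $\lim_{k\to\infty}t_{k-1}^2(\tau_k+\eta_k)=0$. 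The main obstacle is the quasi-monotonicity step: inequality \eqref{neq:tauketak2} naturally produces $t_k^2\eta_k$ (not $t_{k-1}^2\eta_k$) on the right, so everything hinges on extracting the bound $t_k^2-t_{k-1}^2<t_k$ from Momentum-Condition $(ii)$ and on the summability $\sum_k t_k\eta_k<+\infty$; once the clean recursion $a_{k+1}\le a_k+t_k\eta_k$ is in hand, the remaining arguments are routine Opial-type summability manipulations.
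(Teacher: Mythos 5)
Your argument is correct. The paper itself omits the proof of this proposition, deferring to Proposition 3.4 of \cite{lin2023convergence}, and your reconstruction follows exactly the standard template that result relies on: starting from \eqref{neq:tauketak2}, you show $a_k:=t_{k-1}^2(\tau_k+\eta_k)$ satisfies $a_{k+1}\leqs a_k+t_k\eta_k$ with $\sum_k t_k\eta_k<+\infty$ (so $\lim_k a_k$ exists), and then the summability $\sum_k a_k/t_{k-1}<+\infty$ from Proposition \ref{prop:boundetaktauk} combined with $\sum_k 1/t_{k-1}=+\infty$ forces the limit to be zero; the key observation $t_k^2-t_{k-1}^2<t_k$, extracted from \eqref{neq:momencond2}, is precisely what makes the quasi-monotonicity step work. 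The only implicit assumption is that $c_1>0$ in Item $(iii)$ of the Momentum-Condition (so that $t_k\leqs t_{k-1}/c_1$), but the paper makes the same tacit assumption in the proof of Proposition \ref{prop:boundetaktauk}, so this is consistent with the paper's reading of that condition.
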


We also recall Proposition 3.5 in \cite{lin2023convergence} as the following lemma that shows a class of sequences satisfying Momentum-Condition.
\begin{lemma}\label{lem:exptkMomCond}
Let $t_k:=ak^{\omega}+b$, $k\in\bbN$, where $\omega\in(0,1]$, $a\in\bbR$ and $b\in\bbR\backslash\{-ak^{\omega}:k\in\bbN_+\}$. If either $\omega\in(0,1)$, $a\in(0,+\infty)$ or $\omega=1$, $a\in\left(0,\frac{1}{2}\right)$ holds, then $\{t_k\}_{k\in\bbN}$ satisfies Momentum-Condition.
\end{lemma}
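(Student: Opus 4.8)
The plan is to verify the four parts of the Momentum-Condition one by one directly from the closed form $t_k=ak^\omega+b$, using only large-$k$ asymptotics (except for part $(i)$, which must hold for every $k\in\bbN_+$) and treating $b$ as a lower-order perturbation of $ak^\omega$ throughout.

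Part $(i)$ is immediate: the hypothesis $b\notin\{-ak^\omega:k\in\bbN_+\}$ says precisely that $t_k=ak^\omega+b\neq0$ for every $k\in\bbN_+$. For part $(iv)$, since $a>0$ and $\omega>0$ we have $t_k\to+\infty$; moreover $t_k\leqs 2ak^\omega$ for all large $k$, so $\frac{1}{t_k}\geqs\frac{1}{2a}k^{-\omega}$, and because $\omega\leqs1$ the series $\sum_k k^{-\omega}$ diverges, whence $\sum_k\frac{1}{t_k}=+\infty$. For part $(iii)$, both $t_{k-1}=a(k-1)^\omega+b$ and $t_k=ak^\omega+b$ equal $ak^\omega(1+o(1))$, so $t_{k-1}/t_k\to1$; hence for all large $k$ one has $t_k>0$ and $\tfrac12 t_k\leqs t_{k-1}\leqs 2t_k$, which is $(iii)$ with $c_1=\tfrac12$ and $c_2=2$.

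The substance lies in part $(ii)$. The first inequality $1\leqs t_{k-1}$ holds for all large $k$ since $t_{k-1}\to+\infty$. For the second inequality I would set $\Delta_k:=t_{k-1}^2-t_k(t_k-1)=(t_{k-1}^2-t_k^2)+t_k$ and expand using $(k-1)^\alpha=k^\alpha(1-\alpha/k+O(k^{-2}))$ for $\alpha\in\{\omega,2\omega\}$:
\begin{equation*}
t_{k-1}^2-t_k^2=a^2\bigl[(k-1)^{2\omega}-k^{2\omega}\bigr]+2ab\bigl[(k-1)^\omega-k^\omega\bigr]=-2a^2\omega\,k^{2\omega-1}+O(1),
\end{equation*}
so that $\Delta_k=ak^\omega-2a^2\omega\,k^{2\omega-1}+O(1)$. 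Now the two regimes separate exactly as in the statement. If $\omega\in(0,1)$, then $2\omega-1<\omega$, so $\Delta_k=ak^\omega(1+o(1))>0$ for large $k$ and $t_{k-1}/\Delta_k\to1$. If $\omega=1$, the two powers coincide and $\Delta_k=a(1-2a)k+O(1)$, whose leading coefficient $a(1-2a)$ is positive precisely because $a\in(0,\tfrac12)$, and $t_{k-1}/\Delta_k\to\frac{1}{1-2a}$. In both regimes $\Delta_k>0$ eventually and $\lim_k t_{k-1}/\Delta_k$ is a finite positive number, so picking any $\rho$ strictly above it gives $t_{k-1}<\rho\,\Delta_k$ for all $k$ past some $K_1$, which establishes $(ii)$.

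The one delicate point is this asymptotic bookkeeping for $\Delta_k$: one must keep the first correction term in $(k-1)^{2\omega}-k^{2\omega}$ in order to see that, when $\omega=1$, the $O(k)$ part of $t_{k-1}^2-t_k^2$ combines with $t_k$ to leave the coefficient $a(1-2a)$, which is exactly where the restriction $a<\tfrac12$ is forced; for $\omega<1$ that same correction term is of strictly lower order than $ak^\omega$ and is harmless. Everything else is a routine comparison argument. (This lemma coincides with Proposition~3.5 of \cite{lin2023convergence}, so one may alternatively simply cite that result.)
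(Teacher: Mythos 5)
Your proof is correct. Note that the paper itself supplies no argument for this lemma: it simply recalls the statement as Proposition~3.5 of \cite{lin2023convergence}, so there is no in-paper proof to compare against, and your self-contained verification is a genuine addition rather than a variant of an existing one. The verification of items $(i)$, $(iii)$ and $(iv)$ is routine and handled correctly, and the heart of the matter --- item $(ii)$ --- is done right: writing $t_{k-1}^2-t_k(t_k-1)=(t_{k-1}^2-t_k^2)+t_k$ and expanding $(k-1)^{2\omega}-k^{2\omega}=-2\omega k^{2\omega-1}+O(k^{2\omega-2})$ correctly isolates the leading term $ak^{\omega}-2a^2\omega k^{2\omega-1}+O(1)$, from which the two regimes fall out exactly as you describe: for $\omega<1$ the correction $k^{2\omega-1}$ is of strictly lower order than $k^{\omega}$, while for $\omega=1$ the two terms collide and leave the coefficient $a(1-2a)$, which is where the restriction $a<\tfrac12$ is genuinely used. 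The error terms are indeed $O(1)$ precisely because $\omega\leqs1$ forces $2\omega-2\leqs0$ and $\omega-1\leqs0$, a point worth stating explicitly but which your bookkeeping respects. Since $t_{k-1}/\bigl[t_{k-1}^2-t_k(t_k-1)\bigr]$ converges to a finite positive limit in both regimes, any $\rho$ strictly above that limit works, and the proof is complete.
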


To prove the convergence of APPGA, we still need the nonexpansiveness of operator $\mcT$. We say that $\mcT$ is nonexpansive with respect to $\bmH\in\bbDppd$ if $\|\mcT\bmx-\mcT\bmy\|_{\bmH}\leqs\|\bmx-\bmy\|_{\bmH}$ holds for all $\bmx,\bmy\in\bbRd$.
\begin{prop}\label{prop:Tnonexpans}
Let operator $\mcT$ be defined by
\eqref{defmT}, where $\bmP\in\bbDppd$. If $p_{\max}\leqs\frac{2}{L_{\phi}}$, then $\mcT$ is nonexpansive with respect to $\bmP^{-1}$.
\end{prop}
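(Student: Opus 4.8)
The plan is to express $\mcT$ as a composition of two maps and show each is nonexpansive in the $\bmP^{-1}$-metric. Write $\mcT = \prox_{\iota,\bmP^{-1}} \circ G$, where $G := \mcI - \bmP\nabla\phi$ is the preconditioned gradient-descent operator. The proximity operator $\prox_{\iota,\bmP^{-1}}$ is firmly nonexpansive (hence nonexpansive) with respect to $\bmP^{-1}$; this is a standard fact about proximity operators in a weighted inner product, and in our case it is even transparent since by \eqref{cloformproiota} it equals the Euclidean projection $\max(\cdot,\bm0)$ onto $\bbR_+^d$, which is a coordinatewise projection and therefore nonexpansive in any diagonal weighted norm. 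So the crux is to show that $G$ is nonexpansive with respect to $\bmP^{-1}$ under the hypothesis $p_{\max}\leqs\frac{2}{L_\phi}$.

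For the nonexpansiveness of $G$, the key ingredient is that $\nabla\phi$ is $\frac{1}{L_\phi}$-cocoercive (Baillon--Haddad), i.e. $\langle \nabla\phi(\bmx)-\nabla\phi(\bmy),\bmx-\bmy\rangle \geqs \frac{1}{L_\phi}\|\nabla\phi(\bmx)-\nabla\phi(\bmy)\|_2^2$, which holds because $\phi$ is convex with $L_\phi$-Lipschitz gradient (Lemma \ref{lem:phiconvLip}). Setting $\bmu := \bmx-\bmy$ and $\bmv := \nabla\phi(\bmx)-\nabla\phi(\bmy)$, I would expand
$$
\|G\bmx - G\bmy\|_{\bmP^{-1}}^2 = \|\bmu - \bmP\bmv\|_{\bmP^{-1}}^2 = \|\bmu\|_{\bmP^{-1}}^2 - 2\langle\bmu,\bmv\rangle + \|\bmv\|_{\bmP}^2,
$$
using $\langle \bmu,\bmP\bmv\rangle_{\bmP^{-1}} = \langle\bmu,\bmv\rangle$ and $\|\bmP\bmv\|_{\bmP^{-1}}^2 = \langle\bmv,\bmP\bmv\rangle = \|\bmv\|_{\bmP}^2$. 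Since $\bmP$ is diagonal with entries at most $p_{\max}\leqs\frac{2}{L_\phi}$, we have $\|\bmv\|_{\bmP}^2 \leqs p_{\max}\|\bmv\|_2^2 \leqs \frac{2}{L_\phi}\|\bmv\|_2^2$. Combining this with cocoercivity, $\langle\bmu,\bmv\rangle \geqs \frac{1}{L_\phi}\|\bmv\|_2^2$, gives $-2\langle\bmu,\bmv\rangle + \|\bmv\|_{\bmP}^2 \leqs -\frac{2}{L_\phi}\|\bmv\|_2^2 + \frac{2}{L_\phi}\|\bmv\|_2^2 = 0$, so $\|G\bmx-G\bmy\|_{\bmP^{-1}}^2 \leqs \|\bmu\|_{\bmP^{-1}}^2 = \|\bmx-\bmy\|_{\bmP^{-1}}^2$.

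Chaining the two bounds, $\|\mcT\bmx-\mcT\bmy\|_{\bmP^{-1}} \leqs \|G\bmx - G\bmy\|_{\bmP^{-1}} \leqs \|\bmx-\bmy\|_{\bmP^{-1}}$, which is the claim. The main obstacle, if any, is justifying cocoercivity of $\nabla\phi$ in the Euclidean norm and the careful bookkeeping converting between the $\bmP$-, $\bmP^{-1}$-, and Euclidean norms for the diagonal matrix $\bmP$; both are routine once set up, and the Baillon--Haddad theorem (e.g. Corollary 18.16 of \cite{bauschke2017convex}) can simply be cited. One could alternatively phrase the whole argument by noting $\bmP^{1/2}\nabla\phi\circ\bmP^{1/2}$ has Lipschitz constant $\leqs p_{\max}L_\phi \leqs 2$ and invoking the standard unweighted result, but the direct cocoercivity computation above is cleaner and keeps everything in the $\bmP^{-1}$-geometry used elsewhere in the paper.
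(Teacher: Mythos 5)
Your proposal is correct and follows essentially the same route as the paper's proof: the same decomposition $\mcT=\prox_{\iota,\bmP^{-1}}\circ(\mcI-\bmP\nabla\phi)$, nonexpansiveness of the proximity operator, and the Baillon--Haddad cocoercivity bound combined with $p_{\max}\leqs\frac{2}{L_\phi}$ to control the cross and quadratic terms in the $\bmP^{-1}$-norm expansion. The bookkeeping ($\|\bmP\bmv\|_{\bmP^{-1}}^2=\|\bmv\|_{\bmP}^2\leqs p_{\max}\|\bmv\|_2^2$) matches the paper's, so no further changes are needed.
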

\begin{proof}
From the definition of nonexpansiveness, it is easy to see that the composition of two nonexpansive operators is still nonexpansive. To prove this proposition, it suffices to verify the nonexpansiveness (with respect to $\bmP^{-1}$) of operators $\prox_{\iota,\bmP^{-1}}$ and $\mcI-\bmP\nabla\phi$. It follows from Lemma 2.4 of \cite{combettes2005signal} that $\prox_{\iota,\bmP^{-1}}$ is firmly nonexpansive, and hence nonexpansive with respect to $\bmP^{-1}$ (see Remark 4.34 of \cite{bauschke2017convex}).
For any $\bmx,\bmy\in\bbRd$, we let $\bmz:=\nabla\phi(\bmx)-\nabla\phi(\bmy)$. The Baillon-Haddad theorem \cite{baillon1977quelques} gives that $\|\bmz\|_2^2\leqs L_{\phi}\langle\bmx-\bmy,\bmz\rangle$, which together with $p_{\max}\leqs\frac{2}{L_{\phi}}$ implies
$$
\|\bmP\|_2\|\bmz\|_2^2=p_{\max}\|\bmz\|_2^2\leqs2\langle\bmx-\bmy,\bmz\rangle.
$$
Then
\begin{align*}
&\|(\mcI-\bmP\nabla\phi)(\bmx)-(\mcI-\bmP\nabla\phi)(\bmy)\|_{\bmP^{-1}}^2\\
=&\|\bmx-\bmy\|_{\bmP^{-1}}^2+\|\bmP\bmz\|_{\bmP^{-1}}^2-2\langle\bmx-\bmy,\bmP\bmz\rangle_{\bmP^{-1}}\\
\leqs&\|\bmx-\bmy\|_{\bmP^{-1}}^2+\|\bmP\|_2\|\bmz\|_2^2-2\langle\bmx-\bmy,\bmz\rangle\\
\leqs&\|\bmx-\bmy\|_{\bmP^{-1}}^2.
\end{align*}
Hence $\mcI-\bmP\nabla\phi$ is nonexpansive expansive with respect to $\bmP^{-1}$, which completes the proof.
\end{proof}

We are now in a position to prove the main theorem of this section.
\begin{thm}\label{thm:mainconvrate}
Suppose that $\theta_k=\frac{t_{k-1}-1}{t_k}$, $k\in\bbN_{+}$, where $t_k:=ak^\omega+b$, $k\in\bbN$, $b\in\bbR\backslash\{-ak^{\omega}:k\in\bbN_+\}$. If $p_{\max}\leqs\frac{1}{L_{\phi}}$ and either $\omega\in\left(0,1\right)$, $a\in(0,+\infty)$ or $\omega=1$, $a\in\left(0,\frac{1}{2}\right)$ holds, then:
\begin{itemize}
\item[$(i)$] $\left\|\fk-\bmf^{k-1}\right\|_2=o\left(\frac{1}{k^{\omega}}\right)$,
\item[$(ii)$] $\Phi\left(\fk\right)-\Phi\left(\bmf^*\right)=o\left(\frac{1}{k^{2\omega}}\right)$,
\item[$(iii)$] $\left\{\fk\right\}_{k\in\bbN}$ converges to a minimizer of $\Phi$.
\end{itemize}
\end{thm}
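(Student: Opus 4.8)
The plan is to derive items $(i)$ and $(ii)$ directly from Proposition~\ref{prop:ttauetaktozero}, and to prove $(iii)$ by a Fej\'er/Opial-type argument built on the auxiliary sequence $\{\hk\}$. Under the stated hypotheses on $\omega$, $a$ and $b$, Lemma~\ref{lem:exptkMomCond} shows that $\{t_k\}$ with $t_k=ak^\omega+b$ satisfies the Momentum-Condition, so Propositions~\ref{prop1forconverg}, \ref{prop:boundetaktauk} and \ref{prop:ttauetaktozero} all apply; moreover $p_{\max}\leqs\frac{1}{L_{\phi}}\leqs\frac{2}{L_{\phi}}$, so Proposition~\ref{prop:Tnonexpans} applies as well. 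Since $t_{k-1}=a(k-1)^\omega+b$ satisfies $t_{k-1}^2/k^{2\omega}\to a^2>0$, Proposition~\ref{prop:ttauetaktozero} yields $\tau_k=o(1/k^{2\omega})$ and $\eta_k=o(1/k^{2\omega})$. Recalling that $\tau_k=\frac{1}{2}\|\fk-\bmf^{k-1}\|_{\bmP^{-1}}^2$ and that $\|\cdot\|_{\bmP^{-1}}$ is equivalent to $\|\cdot\|_2$ (because $\bmP\in\bbDppd$), the first estimate gives $\|\fk-\bmf^{k-1}\|_2=o(1/k^\omega)$, which is $(i)$; and $\eta_k=\Phi(\fk)-\Phi(\bmf^*)=o(1/k^{2\omega})$ is precisely $(ii)$.

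For $(iii)$, I would first establish that $\{\fk\}$ is bounded and that $\lim_{k\to\infty}\|\fk-\bmf^*\|_{\bmP^{-1}}$ exists for \emph{every} minimizer $\bmf^*$ of $\Phi$. Combining \eqref{defhk}, \eqref{APPGAiter} and the identity $t_k\theta_k=t_{k-1}-1$ gives $\hk=\fk+(t_{k-1}-1)(\fk-\bmf^{k-1})$, hence $\hk-\fk=(t_{k-1}-1)(\fk-\bmf^{k-1})$; since $t_{k-1}-1=O(k^\omega)$, item $(i)$ forces $\hk-\fk\to\bm{0}$. On the other hand, Proposition~\ref{prop1forconverg}$(i)$ gives that $\lim_{k\to\infty}\varepsilon_k=\varepsilon^*$ exists, and since $2t_{k-1}^2\eta_k\to0$, the definition \eqref{defepsilon} forces $\|\hk-\bmf^*\|_{\bmP^{-1}}^2\to\varepsilon^*$; in particular $\{\hk\}$, and therefore $\{\fk\}$, is bounded. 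Expanding $\|\fk-\bmf^*\|_{\bmP^{-1}}^2$ via $\fk-\bmf^*=(\fk-\hk)+(\hk-\bmf^*)$ and using $\fk-\hk\to\bm{0}$ together with the boundedness of $\{\hk\}$, the cross term and the term $\|\fk-\hk\|_{\bmP^{-1}}^2$ vanish in the limit, so $\lim_{k\to\infty}\|\fk-\bmf^*\|_{\bmP^{-1}}^2=\varepsilon^*$.

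Next I would show that every cluster point of $\{\fk\}$ is a fixed point of $\mcT$, hence a minimizer of $\Phi$ by Proposition~\ref{prop:FPchar}. Since $\{\theta_k\}$ is bounded (item~$(iii)$ of the Momentum-Condition), $(i)$ gives $\|\tfk-\fk\|_2=\theta_k\|\fk-\bmf^{k-1}\|_2\to0$ and $\|\fkk-\fk\|_2\to0$; hence along any subsequence with $\bmf^{k_j}\to\bar{\bmf}$ one also has $\tilde{\bmf}^{k_j}\to\bar{\bmf}$ and $\bmf^{k_j+1}\to\bar{\bmf}$. Because $\mcT$ is continuous (Proposition~\ref{prop:Tnonexpans}), letting $j\to\infty$ in $\bmf^{k_j+1}=\mcT\tilde{\bmf}^{k_j}$ yields $\bar{\bmf}=\mcT\bar{\bmf}$. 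Finally, $\{\fk\}$ is bounded in $\bbR^d$, so it has at least one cluster point; and if $\bar{\bmf}_1,\bar{\bmf}_2$ were two cluster points (both minimizers), expanding $\|\fk-\bar{\bmf}_1\|_{\bmP^{-1}}^2-\|\fk-\bar{\bmf}_2\|_{\bmP^{-1}}^2$ shows that $\langle\fk,\bar{\bmf}_1-\bar{\bmf}_2\rangle_{\bmP^{-1}}$ converges, and evaluating this limit along subsequences tending to $\bar{\bmf}_1$ and to $\bar{\bmf}_2$ forces $\|\bar{\bmf}_1-\bar{\bmf}_2\|_{\bmP^{-1}}=0$. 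Thus $\{\fk\}$ has a unique cluster point and therefore converges to a minimizer of $\Phi$, establishing $(iii)$.

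The main obstacle is the part of $(iii)$ that converts ``$\{\varepsilon_k\}$ converges'' into ``$\{\|\fk-\bmf^*\|_{\bmP^{-1}}\}$ converges'': this hinges entirely on the asymptotic cancellation $\hk-\fk\to\bm{0}$, which is exactly where the $o(1/k^\omega)$ rate from $(i)$ is indispensable, since a mere $O(1/k^\omega)$ bound would not suffice. Once this Fej\'er-type monotonicity is secured, the cluster-point analysis and the concluding Opial-style uniqueness argument are routine.
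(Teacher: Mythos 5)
Your proposal is correct and follows essentially the same route as the paper: items $(i)$ and $(ii)$ come from Lemma~\ref{lem:exptkMomCond} plus Proposition~\ref{prop:ttauetaktozero} and the equivalence of $\|\cdot\|_{\bmP^{-1}}$ with $\|\cdot\|_2$, while $(iii)$ is the same Fej\'er/Opial argument built on the convergence of $\varepsilon_k$, the vanishing gap between $\bmh^k$ and $\fk$ (which is precisely where the $o(1/k^\omega)$ rate is needed, as you note), and the identification of cluster points as fixed points of $\mcT$. The only cosmetic differences are that the paper works with $\bmh^{k+1}-\fk=t_k(\fkk-\fk)$ rather than $\bmh^k-\fk=(t_{k-1}-1)(\fk-\bmf^{k-1})$, first establishes $\|\mcT\fk-\fk\|_{\bmP^{-1}}\to0$ before the subsequence step, and concludes directly from the existence of $\lim_k\|\fk-\hat{\bmf}\|_{\bmP^{-1}}$ for the single cluster point $\hat{\bmf}$ instead of your two-cluster-point comparison.
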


\begin{proof}
We first prove Item $(i)$ and $(ii)$ together by employing Proposition \ref{prop:ttauetaktozero}. It follows from Lemma \ref{lem:exptkMomCond} that $\{t_k\}_{k\in\bbN}$ satisfies Momentum-Condition. Then we know from Proposition \ref{prop:ttauetaktozero} that
\begin{align}
\label{eq:limfkfkkzero}&\lim_{k\to\infty}t_{k-1}^2\left\|\fk-\bmf^{k-1}\right\|_{\bmP^{-1}}^2=0,\\
\label{eq:limPhifkfstarzero}&\lim_{k\to\infty}t_{k-1}^2\left(\Phi\left(\fk\right)-\Phi\left(\bmf^*\right)\right)=0.
\end{align}
Note that $\left\|\fk-\bmf^{k-1}\right\|_{\bmP^{-1}}^2\geqs \frac{1}{p_{\max}}\|\fk-\bmf^{k-1}\|_2^2$, which together with \eqref{eq:limfkfkkzero} and the fact $t_k=ak^{\omega}+b$ implies Item $(i)$. Similarly, Item $(ii)$ follows from \eqref{eq:limPhifkfstarzero} and $t_k=ak^{\omega}+b$ directly.

We then prove Item $(iii)$. To this end, we need to verify that $\lim_{k\to\infty}\|\mcT\bmf^k-\bmf^k\|_{\bmP^{-1}}=0$ and $\lim _{k\to\infty}\left\|\fk-\bmf^*\right\|_{\bmP^{-1}}$ exists for any fixed point $\bmf^*$ of $\mcT$. It is easy to see from Momentum-Condition that $\{\theta_k\}_{k\in\bbN_+}$ is bounded. In addition, it follows from \eqref{eq:limfkfkkzero} that $\lim_{k\to\infty}\|\bmf^{k}-\bmf^{k-1}\|_{\bmP^{-1}}=0$. These together with the nonexpansiveness of $\mcT$ (see Proposition \ref{prop:Tnonexpans}) yield that
\begin{align*}
\lim _{k\to\infty}\left\|\mcT\fk-\fkk\right\|_{\bmP^{-1}}&=\lim _{k\to\infty}\left\|\mcT\fk-\mcT\left(\fk+\theta_k\left(\fk-\bmf^{k-1}\right)\right)\right\|_{\bmP^{-1}}\\
&\leqs\lim_{k\to\infty}|\theta_k|\left\|\fk-\bmf^{k-1}\right\|_{\bmP^{-1}}=0.
\end{align*}
Hence
\begin{align*}
\lim_{k\to\infty}\left\|\mcT\fk-\fk\right\|_{\bmP^{-1}}\leqs\lim_{k\to\infty}\left\|\mcT\fk-\fkk\right\|_{\bmP^{-1}}+\lim_{k\to\infty}\left\|\fkk-\fk\right\|_{\bmP^{-1}}=0,
\end{align*}
which implies that $\lim_{k\to\infty}\left\|\mcT\fk-\fk\right\|_{\bmP^{-1}}=0$.

We next prove the existence of $\lim_{k\to\infty}\left\|\fk-\bmf^*\right\|_{\bmP^{-1}}$. To this end, we prove the existence of $\lim_{k\to\infty}\left\|\bmh^k-\bmf^*\right\|_{\bmP^{-1}}$, where $\left\{\bmh^k\right\}_{k \in \bbN_{+}}$ is defined by \eqref{defhk}. From Item $(i)$ of Proposition \ref{prop1forconverg} and Equation \eqref{eq:limPhifkfstarzero}, we see that $\lim_{k\to\infty}\varepsilon_k$ and $\lim_{k\to\infty}2t_{k-1}^2\eta_k$ exist, where $\varepsilon_k$ is defined by \eqref{defepsilon}. Then \eqref{defepsilon} implies the existence of $\lim_{k\to\infty}\left\|\bmh^k-\bmf^*\right\|_{\bmP^{-1}}$. From \eqref{eq:hkkfkkfk} in the proof of Proposition \ref{prop1forconverg}, we have
$\bmh^{k+1}=t_k\left(\fkk-\fk\right)+\fk$. Letting
$$
r_{k+1}:=|t_{k}|\left\|\fkk-\fk\right\|_{\bmP^{-1}},\ w_k:=\|\fk-\bmf^*\|_{\bmP^{-1}}
$$
for $k\in\bbN$ and using the triangle inequality, we have
$$
w_k-r_{k+1}\leqs\left\|\bmh^{k+1}-\bmf^*\right\|_{\bmP^{-1}}\leqs w_k+r_{k+1},
$$
that is,
\begin{equation}\label{neq:hkfstarrsk}
\left\|\bmh^{k+1}-\bmf^*\right\|_{\bmP^{-1}}-r_{k+1}\leqs w_k\leqs\left\|\bmh^{k+1}-\bmf^*\right\|_{\bmP^{-1}}+r_{k+1}.
\end{equation}
We also see from \eqref{eq:limfkfkkzero} that $\lim_{k\to\infty}r_k=0$. Now, \eqref{neq:hkfstarrsk} together with the existence of $\lim_{k\to\infty}\|\bmh^k-\bmf^*\|_{\bmP^{-1}}$ and the fact $\lim_{k\to\infty}r_k=0$ implies that $\lim_{k\to\infty}\|\fk-\bmf^*\|_{\bmP^{-1}}$ exists.

The existence of $\lim_{k\to\infty}\|\fk-\bmf^*\|_{\bmP^{-1}}$ implies the boundedness of sequence $\{\fk\}_{k\in\bbN}$. Then there exists a subsequence $\left\{\bmf^{k_j}\right\}_{j\in\bbN_{+}}$ of $\{\fk\}_{k\in\bbN_0}$ that converges to some vector $\hat{\bmf}$. We next prove that $\hat{\bmf}$ is a
fixed point of $\mcT$. By the nonexpansiveness of $\mcT$, we have that
$$
\lim_{j\to\infty}\|\mcT\hat{\bmf}-\mcT\bmf^{k_j}\|_{\bmP^{-1}}\leqs\lim_{j\to\infty}\|\hat{\bmf}-\bmf^{k_j}\|_{\bmP^{-1}}=0,
$$
which implies that $\mcT\hat{\bmf}=\mcT\bmf^{k_j}$. This together with the proved fact $\lim_{k\to\infty}\|\mcT\bmf^k-\bmf^k\|_{\bmP^{-1}}=0$ gives that
$$
\mcT\hat{\bmf}-\hat{\bmf}=\lim_{j\to\infty}(\mcT\bmf^{k_j}-\bmf^{k_j})=0,
$$
that is, $\hat{\bmf}$ is a fixed point of $\mcT$. Recall that $\lim _{k\to\infty}\|\fk-\bmf^*\|_{\bmP^{-1}}$ exists for any fixed point $\bmf^*$ of $\mcT$. Of course, $\lim _{k\to\infty}\|\fk-\hat{\bmf}\|_{\bmP^{-1}}$ exists, and hence
$$
\lim _{k\to\infty}\|\fk-\hat{\bmf}\|_{\bmP^{-1}}=\lim _{j\to\infty}\|\bmf^{k_j}-\hat{\bmf}\|_{\bmP^{-1}}=0,
$$
which together with Proposition \ref{prop:FPchar} implies Item $(iii)$ of this theorem.
\end{proof}

\section{Numerical results}
In this section, we first present several numerical results, which aim to evaluate the performance of APPGA for the SHOITV regularized PET image reconstruction model. The results provide insights into the reconstruction quality, convergence behavior, and computational efficiency of the algorithms. Next we demonstrate the broader applicability of the GN momentum to the original HOITV regularized PET image reconstruction model.

\subsection{Simulation setup for APPGA}
We implemented the algorithms via Matlab through a 2D PET simulation model as described in \cite{schmidtlein2017relaxed}. The number of
counts used in these 2D simulations were set to be equivalent to those from a 3D PET brain patient acquisition (administered 370 MBq FDG and imaged 1-hour post injection) collected from the central axial slice via GE D690/710 PET/CT. The resulting reference count distribution was used as the Poisson parameters for the noise realizations. An area integral projection method was used to build the projection matrix based on a cylindrical detector ring consisting of 576 detectors whose width are 4 mm. We set the FOV as 300 mm and use 288 projection angles to reconstruct a $256\times256$ image.

To simulate the physical factors that will affect the resolution of the reconstructed image, such as positron range, detector width, penetration, residual momentum of the positron and imperfect decoding, the phantom was convolved with an idealized (space-invariant, Gaussian) point spread function (PSF), which was set as a constant over the whole FOV. The full width half maximum (FWHM) of this PSF was set to 6.59 mm based on physical measurements from acceptance testing and \cite{moses2011fundamental}. The true count projection data was produced by forward-projecting the phantom convolved by the PSF. Uniform water attenuation (with the attenuation coefficient 0.096 cm$^{-1}$) was simulated using the PET image support. The background noise was implemented as describe in \cite{berthon2015petstep} and was based on 25\% scatter fraction and 25\% random fraction, given by $SF:=Sc/(Tc+Sc)$ and $RF:=Rc/(Tc+Sc+Rc)$, respectively, where $Tc$, $Sc$ and $Rc$ represent true, random, and scatter counts respectively. Scatter was added by forward-projecting a highly smoothed version of the images, which was added to the attenuated image sinogram scaled by the scatter fraction. Random counts were simulated by adding a uniform distribution to the true plus scatter count distribution scaled by the random fraction. We call the summation of $Tc$, $Sc$, and $Rc$ the total counts and denote it by $TC$.

Next we provide the figure-of-merits used for the comparisons: the Normalized Objective Function Value (NOFV), the Peak Signal-to-Noise Ratio (PSNR), the Normalized Relative Contrast (NRC), and the Central Line Profile (CLP). The NOFV is defined by
$$
\text{NOFV}(\bff^k):=\frac{\Phi(\bff^k)-\Phi_\text{ref}}{\Phi_0-\Phi_\text{ref}},
$$
where $\Phi$ denotes the objective function, $\Phi_0$ is the objective function value of the initial image, and $\Phi_\text{ref}$ denotes the reference objective function value. For simulation results, we set $\Phi_\text{ref}$ to the objective function value of the image reconstructed by 1000 iterations of ROS-PKMA \cite{lin2019krasnoselskii}. The PSNR is defined by
$$
\text{PSNR}(\bmf^k):=10\cdot\log_{10}\left(\frac{\text{MAX}^2}{\text{MSE}(\bmf^k)}\right),
$$
where MAX is the maximum possible pixel value of the image, MSE is the Mean Squared Error given by $\text{MSE}(\bmf^k):=\|\bff^k-\bff_{\text{true}}\|_2^2/d$, $\bff_{\text{true}}\in\bRd$ is the ground truth. This metric evaluates the quality of a reconstructed image compared to the ground truth. For the definition of NRC, we consider the following two regions: $\text{ROI}_H$ and $\text{ROI}_B$, which represent a region of interest within a specific hot sphere, and a background region positioned at a considerable distance from any hot sphere, respectively. The size of $\text{ROI}_B$ is identical to that of $\text{ROI}_H$, ensuring a fair comparison. The relative contrast (RC) is defined by $\text{RC}:=\frac{|E_{\text{ROI}_H}-E_{\text{ROI}_B}|}{E_{\text{ROI}_B}}$, where $E_{\text{ROI}_H}$ and $E_{\text{ROI}_B}$ represent the average activities of $\text{ROI}_H$ and $\text{ROI}_B$, respectively. The NRC is then given by
\[
\text{NRC}(\bff^k): =\frac{\text{RC}_{\bff^k}}{\text{RC}_{\text{true}}},
\]
where $\text{RC}_{\bff^k}$ and $\text{RC}_{\text{true}}$ are the relative contrast of $\bff^k$ and the ground truth respectively. This metric empowers us to evaluate the fluctuations in average activities and compare the relative contrast between reconstructed images and the ground truth image. The CLP is a metric used to analyze the intensity distribution along a central line within an image. By comparing the CLP of a reconstructed image to that of a ground truth image, we can assess the fidelity of the reconstruction and the extent to which important details have been preserved.

\begin{figure}[htbp]
\centering
\begin{tabular}{cc}
  \includegraphics[width=0.18\textwidth]{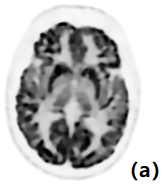}
  &\hspace{2em}\includegraphics[width=0.22\textwidth]{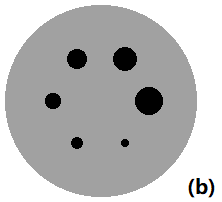}
\end{tabular}
\caption{(a) Brain phantom: high quality clinical PET brain image; (b) uniform phantom: uniform background with six uniform hot spheres of distinct radii.}
\label{fig:phantom}
\end{figure}

Our simulations utilized two 256$\times$256 numerical 2D phantom as shown in Figure \ref{fig:phantom}. The brain phantom was obtained from a high quality clinical PET brain image. The uniform phantom consists of a uniform background with six uniform hot spheres with distinct radii 4, 6, 8, 10, 12, 14 pixels. The activity ratio of the uniform hot spheres to the uniform background is 4:1. For the simulation experiments, we reconstructed the brain phantom to compare the NOFV and PSNR. Additionally, we reconstructed the uniform phantom to evaluate the NRC and CLP.  All simulations were performed in a 64-bit windows 11 laptop with Intel Core i7-13700H Processor at 2.40 GHz, 16 GB of DDR5 memory and 512GB NVMe SSD.

In all the experiments of APPGA, the preconditioning matrix $\bmP$ is set to the existing EM preconditioner \cite{krol2012preconditioned}, which is defined by
\begin{equation}\label{EM}
\bmP_{\EM}:=\beta\cdot\diag{\left(\frac{\fk}{\bmLambda}\right)},\ \ \beta>0.
\end{equation}
The vector $\bmLambda\in\bbRd$ is given by
$$\Lambda_j=\begin{cases}
(\bmA^\top{\bf1}_m)_j,&\text{if}\ (\bmA^\top{\bf1}_m)_j>0,\\
1,&\text{otherwise},
\end{cases}
$$
for $j=1,2,\ldots,d$. In the context of convergence analysis, it is essential that the preconditioner maintains consistency across all iteration steps. Nevertheless, in practical applications, updating the preconditioner during the initial iterations and subsequently fixing it in later iterations is permissible and does not undermine the convergence.

We show in Table \ref{tab:paramSHOTV} the setting of the parameters in the SHOTV regularized reconstruction model for the simulation experements. For the reconstruction of the brain phantom, to suppress the staircase artifacts and avoid excessive image smoothing, we empirically found that setting $\lambda_2 = \lambda_1$ was a reasonable choice. This approach also simplified the search for the optimal regularization parameter based on the maximal PSNR criterion. For the uniform phantom, due to its piecewise constant nature, the second-order TV regularization parameter $\lambda_2$ was set to 0. The determination of the setting of model and algorithmic parameters in APPGA were guided by the assessment of image quality and performance of the four figure-of-merits. Based on the convergence conditions of APPGA, the parameters $a$ and $b$ in the momentum step were set to $1/8$ and $1$, respectively. For PPGA, its parameter settings were consistent with those of APPGA. For PKMA, its algorithmic parameters were determined according to \cite{lin2019krasnoselskii}. To better evaluate the convergence speed of APPGA with different power parameters in the GN momentum, we shall show the comparison of APPGA with four different settings of $\omega$: $1/4$, $1/2$, $3/4$ and $1$. Let ACTc, NPFOV, and NPA represent the total attenuation corrected true counts, the number of pixels within the field of view, and the number of projection angles, respectively. For all the compared algorithms in the simulation experiments, we always set the initial images as the uniform disk $\text{TMC}\cdot{\bm1}_{\text{disk}}$ with the same size as the FOV, where $\text{TMC}:=\text{ACTc}/(\text{NPFOV}\cdot\text{NPA})$, ${\bm1}_{\text{disk}}$ is the image whose values are 1 within the disk and are 0 outside the disk \cite{lin2019krasnoselskii}.

\begin{table}[htbp]
\centering
\caption{Model parameters of the SHOTV regularized reconstruction model for the simulation experiments.}
\small
\renewcommand{\arraystretch}{1}
\begin{tabular}{c|c}
\hline
{Brain}&$\epsilon=0.001$ \\
{phantom}&$\lambda_1=\lambda_2=0.04$\\
\cline{1-2}
{Uniform}&$\epsilon=0.001$ \\
{phantom}&$\lambda_1=0.4$, $\lambda_2=0$ \\
\hline
\end{tabular}
\label{tab:paramSHOTV}
\end{table}

\subsection{Simulation results for APPGA}
In this subsection, we show the performance of APPGA for SHOTV regularized reconstruction model, and compare it with PPGA and PKMA. We set the total count $TC=6.8\times10^6$. For the step-size parameter $\beta$ in APPGA, we set $\beta=1$ for the brain phantom, and $\beta=0.1$ for the uniform phantom. First, we evaluate the performance of the compared algorithms in reconstructing the brain phantom. Since the time required for each iteration of the compared algorithms is very close, we can compare the convergence rates of these algorithms by observing the variation of the considered figure-of-merits with the number of iterations.

In Figure \ref{nonOS_BrainNOFVPSNR}, we present the plots of NOFV and PSNR versus the number of iterations. It is evident that APPGA converges at a progressively faster rate as the value of $\omega$ increases. For the four different settings of $\omega$ we considered, APPGA consistently converges much faster than PPGA. In the performance evaluation of NOFV, APPGA surpasses PKMA after 15 iterations when $\omega=1$, while this superiority is achieved after 31 iterations when $\omega=3/4$. Similarly, from the plot of PSNR, it is observe that APPGA outperforms PKMA after 17 iterations when $\omega=1$, and after 32 iterations when $\omega=3/4$. Figure \ref{nonOS_BrainReconIM} shows the reconstructed images of the brain phantom by using PKMA, PPGA and APPGA ($\omega=1$) at 25, 50 and 100 iterations, respectively. At these three different number of iterations, the quality of the images reconstructed by APPGA with $\omega=1$ is always the best. The image quality obtained by APPGA ($\omega=1$) at 25 (50) iterations is close to that of PKMA at 50 (100) iterations.

\begin{figure}[htbp]
\centering
\begin{tabular}{cc}
\includegraphics[width=0.48\textwidth]{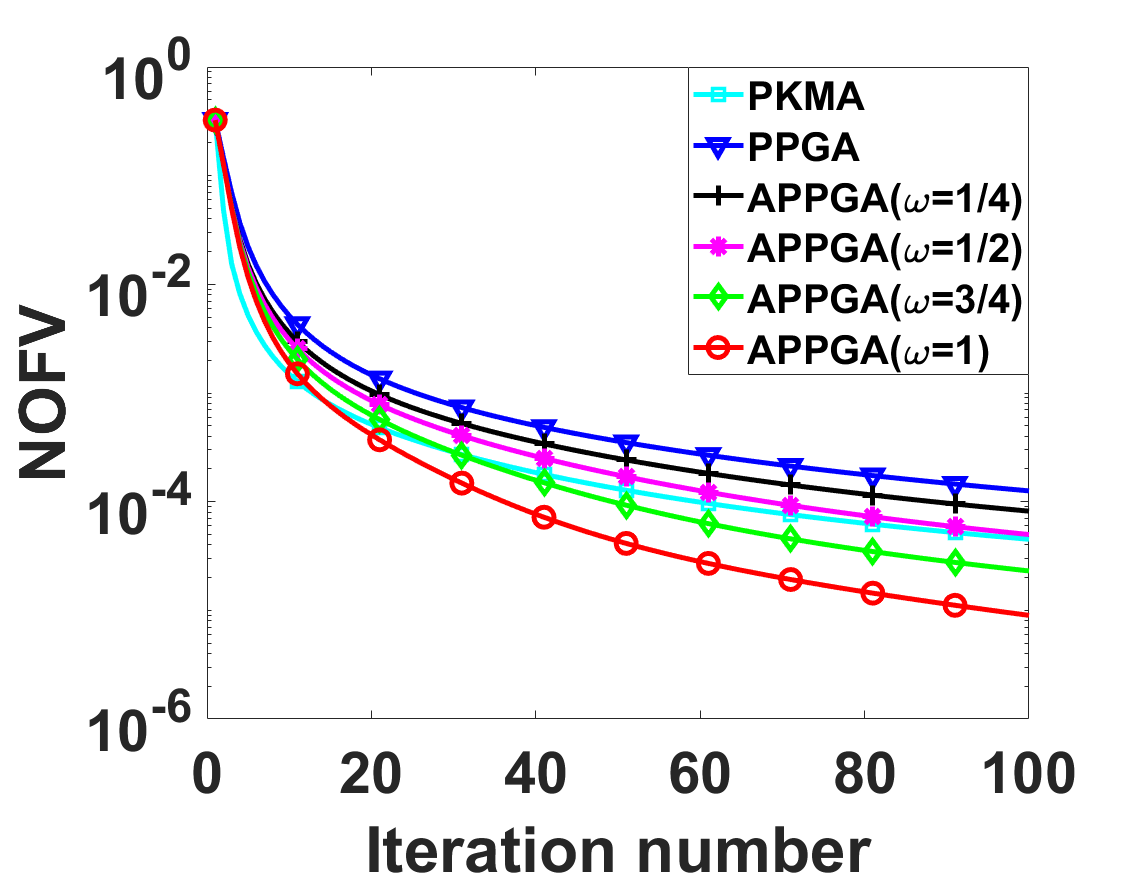}& \includegraphics[width=0.48\textwidth]{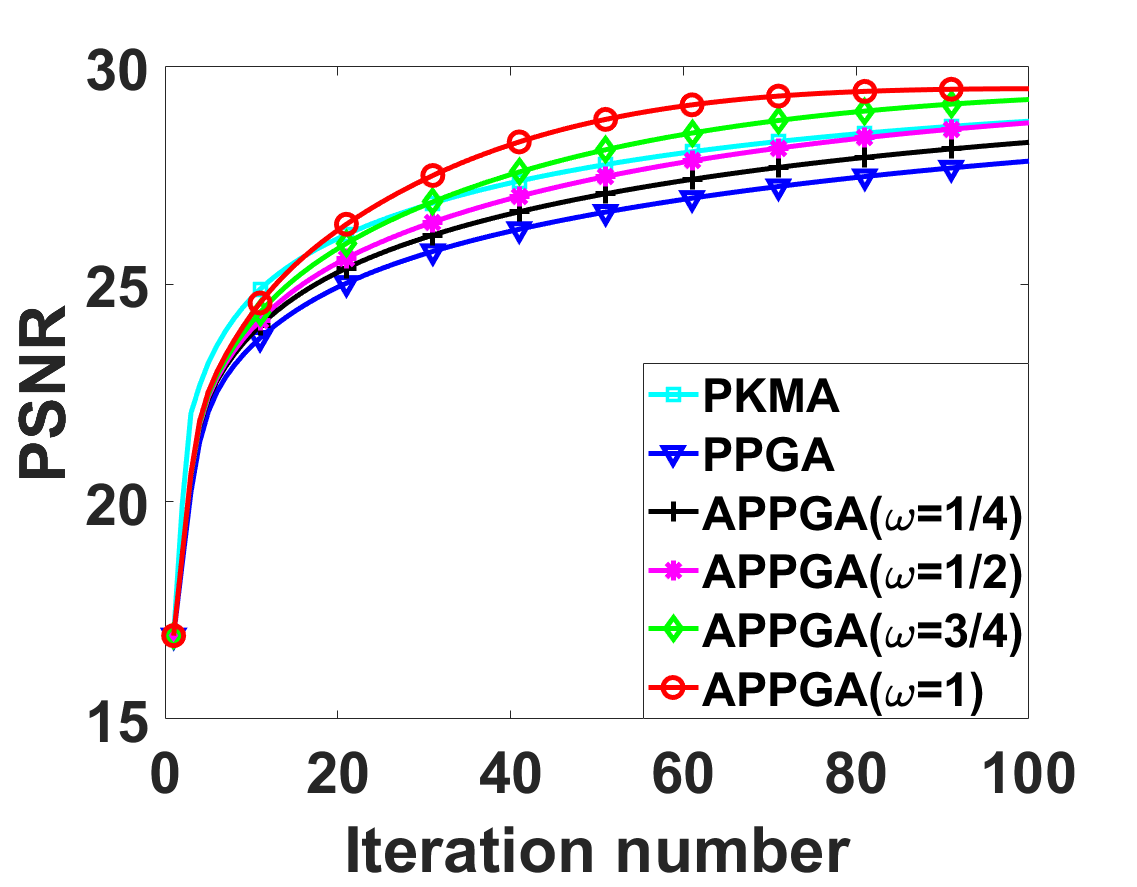}
\end{tabular}
\caption{NOFV (left) and PSNR (right) versus iteration number by PKMA, PPGA and APPGA ($\omega=1/4,1/2,3/4,1$).}
\label{nonOS_BrainNOFVPSNR}
\end{figure}

\begin{figure}[htbp]
\centering
\begin{tabular}{ccccc}
\small{PKMA} &\small{PPGA} &\small{APPGA($\omega$ =1)} \\
\vspace{-4pt}
\includegraphics[width=0.18\textwidth]{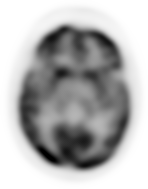}&
\includegraphics[width=0.18\textwidth]{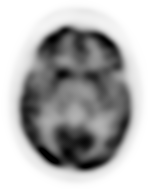}&
\includegraphics[width=0.18\textwidth]{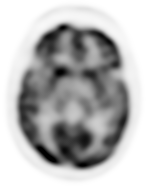}\\
\vspace{-4pt}
\includegraphics[width=0.18\textwidth]{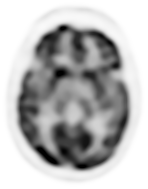}&
\includegraphics[width=0.18\textwidth]{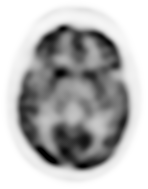}&
\includegraphics[width=0.18\textwidth]{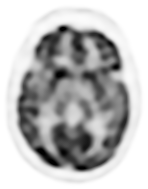}\\
\vspace{-4pt}
\includegraphics[width=0.18\textwidth]{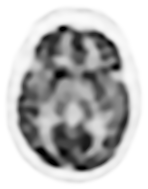}&
\includegraphics[width=0.18\textwidth]{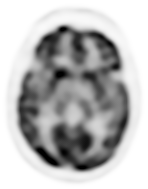}&
\includegraphics[width=0.18\textwidth]{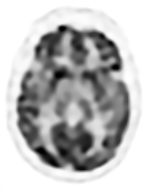}
\end{tabular}
\caption{Reconstructed brain images of PKMA, PPGA and APPGA($\omega$ =1): top to bottom rows are reconstructed by 25, 50 and 100 iterations, respectively.}
\label{nonOS_BrainReconIM}
\end{figure}

\newpage
Next, we analyzed the performance of the compared algorithms in reconstructing the uniform phantom. Figure \ref{nonOS_UniformNRC} presents the plot of NRC versus iteration number for the largest and smallest hot spheres of the uniform phantom. Figure \ref{nonOS_UniformCLP} shows the CLP of the reconstructed images at 50 and 100 iterations. Additionally, Figure \ref{nonOS_UniformReconIM} displays the reconstructed images of the uniform phantom. The results depicted in these figures demonstrate that APPGA outperforms PKMA and PPGA in reconstructing the uniform phantom. In the same way, larger $\omega$ in APPGA yields faster convergence.
\begin{figure}[htbp]
\centering
\begin{tabular}{cc}
\includegraphics[width=0.48\textwidth]{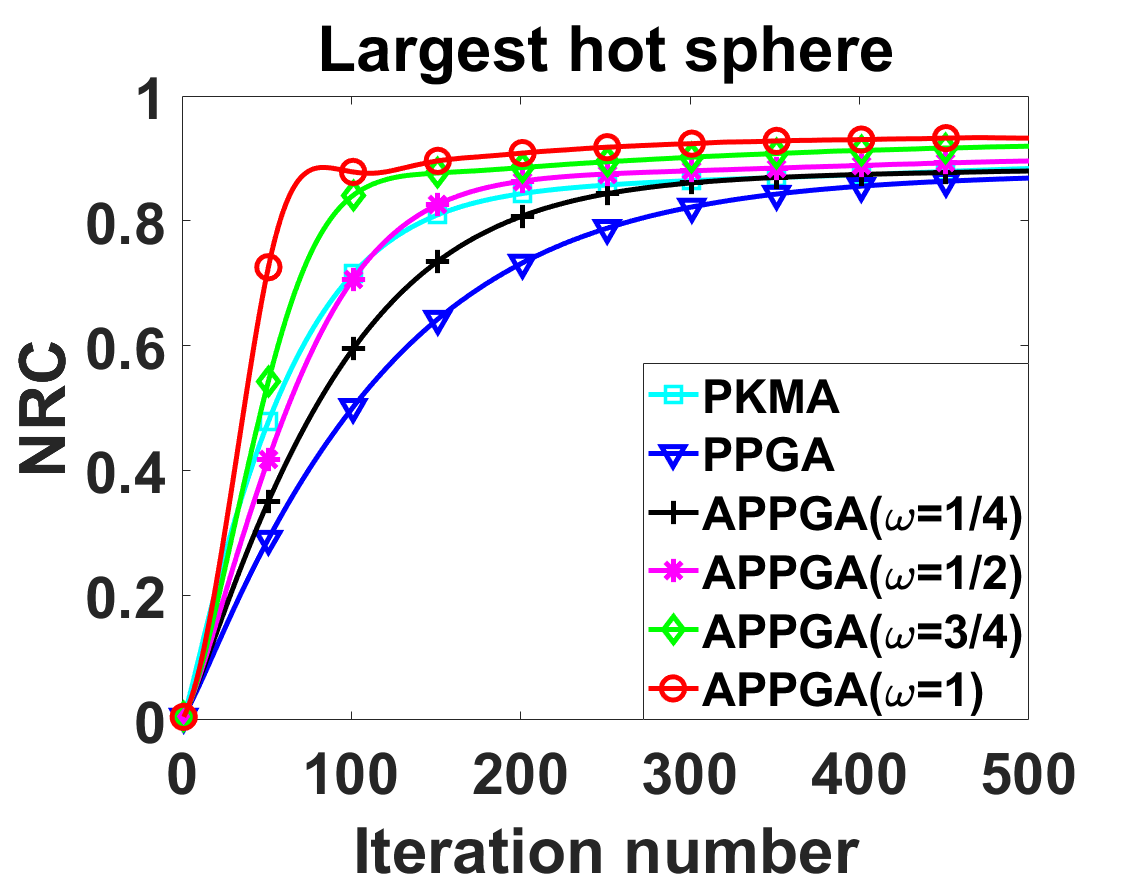}
&\includegraphics[width=0.48\textwidth]{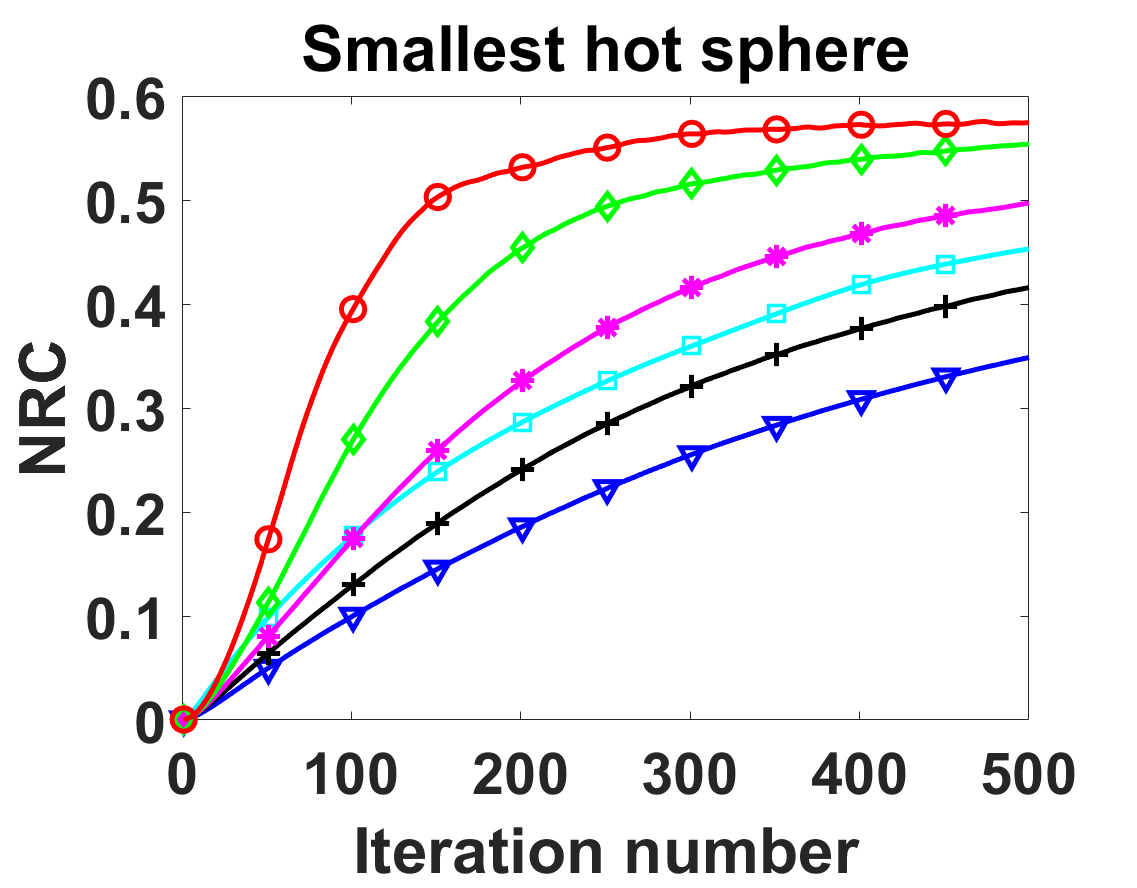}
\end{tabular}
\caption{NRC of the largest hot sphere (left) and the smallest hot sphere (right) versus iteration number by PKMA, PPGA and APPGA ($\omega=1/4,1/2,3/4,1$).}
\label{nonOS_UniformNRC}
\end{figure}

\begin{figure}[htbp]
\centering
\begin{tabular}{cc}
\includegraphics[width=0.48\textwidth]{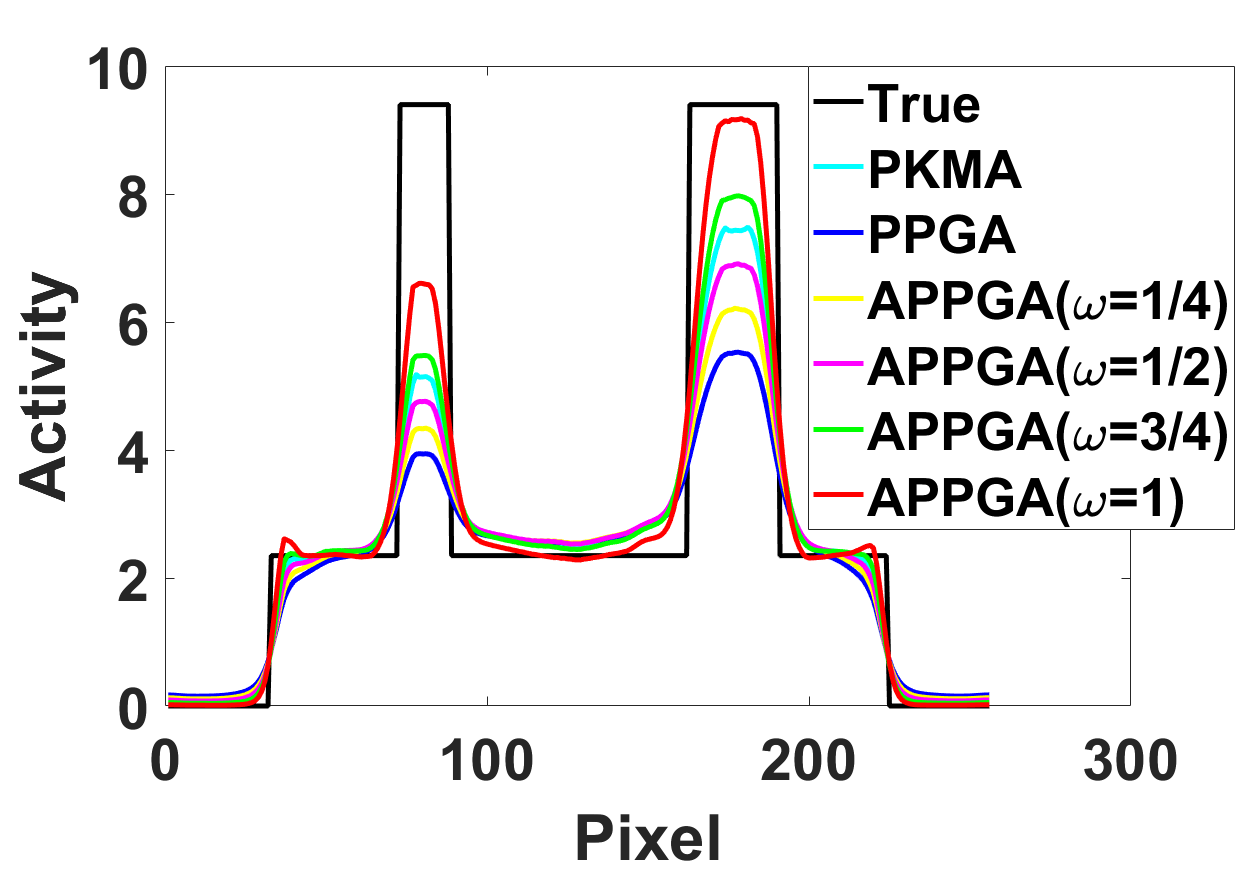}&
\includegraphics[width=0.48\textwidth]{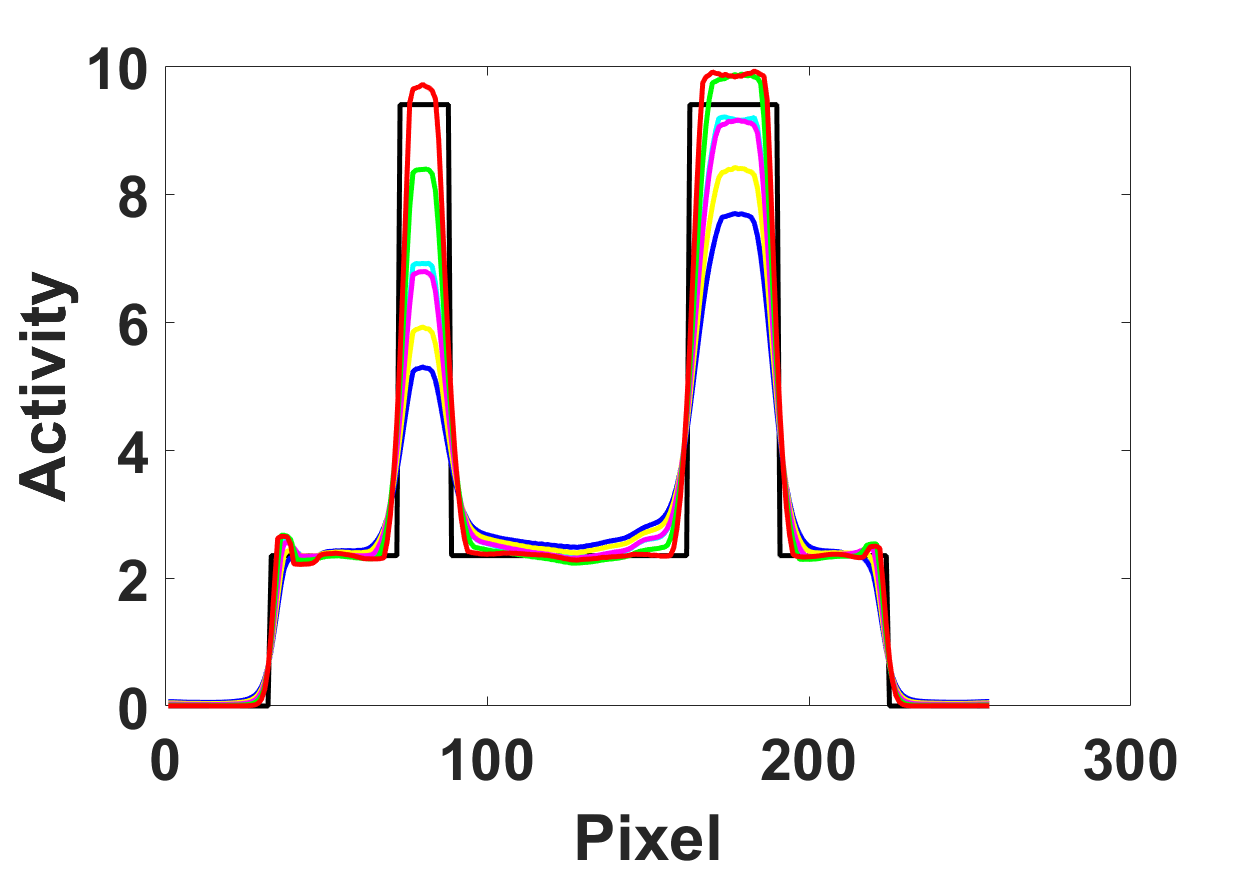}
\end{tabular}
\caption{CLP of the images reconstructed by 50 iterations (left) and 100 iterations (right) of PKMA, PPGA and APPGA ($\omega=1/4,1/2,3/4,1$).}
\label{nonOS_UniformCLP}
\end{figure}

\begin{figure}[htbp]
\centering
\begin{tabular}{ccc}
\small{PKMA} &\small{PPGA} &\small{APPGA($\omega$ =1)} \\
\vspace{-4pt}
\includegraphics[width=0.22\textwidth]{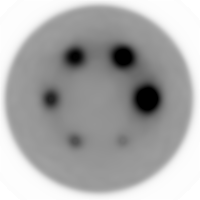}&
\includegraphics[width=0.22\textwidth]{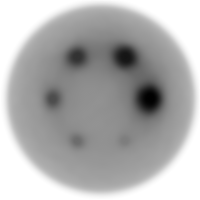}&
\includegraphics[width=0.22\textwidth]{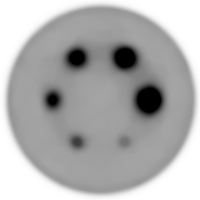}\\
\vspace{-4pt}
\includegraphics[width=0.22\textwidth]{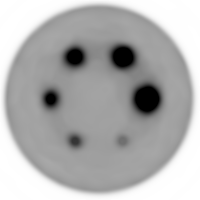}&
\includegraphics[width=0.22\textwidth]{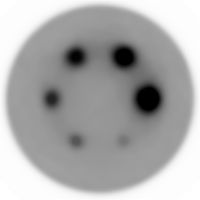}&
\includegraphics[width=0.22\textwidth]{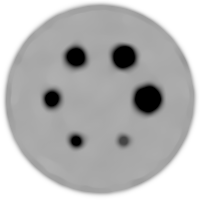}\\
\vspace{-4pt}
\includegraphics[width=0.22\textwidth]{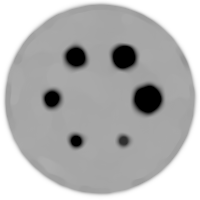}&
\includegraphics[width=0.22\textwidth]{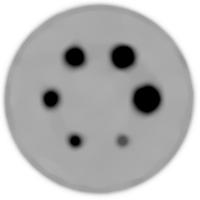}&
\includegraphics[width=0.22\textwidth]{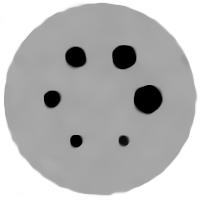}
\end{tabular}
\caption{Images of the uniform phantom reconstructed utilizing PKMA, PPGA and APPGA ($\omega=1$): top to bottom rows are reconstructed by 50, 100 and 500 iterations respectively.}
\label{nonOS_UniformReconIM}
\end{figure}

\newpage
\subsection{Extension of the generalized Nesterov momentum}
In Section \ref{sec:convAPPGA}, we establish that the APPGA with the Generalized Nesterov (GN) momentum exhibits the convergence rate presented in Theorem \ref{thm:mainconvrate} when solving a two-term optimization model, where one term is convex and differentiable with a Lipschitz continuous gradient, and the other is convex with a closed-form proximity operator. Notably, though the GN momentum technique may not guarantee the same convergence or convergence rate results for more complex models, its generality offers greater potential for applying momentum acceleration techniques to more intricate problems compared to the conventional Nesterov momentum.

In this subsection, we evaluate the acceleration effect of the GN momentum technique on the FPPA algorithm for solving the original HOITV regularized PET image reconstruction model \eqref{model:HOTV}, which includes multiple nondifferentiable terms. We demonstrate through numerical experiments that for such a complex objective function, conventional Nesterov momentum may fail to guarantee convergence. In contrast, the GN momentum scheme exhibits more robust and adaptable behavior when we decrease the exponent parameter $\omega$ in \eqref{GNtheta}.

We recall from \cite{li2015multi,li2016fast,lin2019krasnoselskii} that the iterative scheme of Fixed-Point Proximity Algorithm (FPPA) for solving model \eqref{model:HOTV} is given by
\begin{equation*}\label{FPPAiter}
\begin{cases}
\bmf^{k+1}=\max\left(\fk-\bmP\left(\nabla F(\fk)+\bmB_1^\top\bk+\bmB_2^\top\ck\right),{\bm0}\right)\\
\bmb^{k+1}=\rho_1\Big(\mcI-\prox_{\frac{\lambda_1}{\rho_1}\varphi_1}\Big)\Big(\frac{1}{\rho_1}\bk+\bmB_1(2\bmf^{k+1}-\fk)\Big)\\
\bmc^{k+1}=\rho_2\Big(\mcI-\prox_{\frac{\lambda_2}{\rho_2}\varphi_2}\Big)\Big(\frac{1}{\rho_2}\ck+\bmB_2(2\bmf^{k+1}-\fk)\Big)\\
\end{cases}.
\end{equation*}
Then the iterative scheme of Accelerated FPPA (AFPPA) is given by
\begin{equation*}\label{AFPPAiter}
\begin{cases}
\tfk=\bmf^k+\theta_k(\bmf^k-\bmf^{k-1})\\
\tbk=\bmb^k+\theta_k(\bmb^k-\bmb^{k-1})\\
\tck=\bmc^k+\theta_k(\bmc^k-\bmc^{k-1})\\
\bmf^{k+1}=\max\left(\tfk-\bmP\left(\nabla F(\tfk)+\bmB_1^\top\tbk+\bmB_2^\top\tck\right),{\bm0}\right)\\
\bmb^{k+1}=\rho_1\Big(\mcI-\prox_{\frac{\lambda_1}{\rho_1}\varphi_1}\Big)\Big(\frac{1}{\rho_1}\tbk+\bmB_1(2\bmf^{k+1}-\tfk)\Big)\\
\bmc^{k+1}=\rho_2\Big(\mcI-\prox_{\frac{\lambda_2}{\rho_2}\varphi_2}\Big)\Big(\frac{1}{\rho_2}\tck+\bmB_2(2\bmf^{k+1}-\tfk)\Big)\\
\end{cases}.
\end{equation*}
The preconditioning matrix $\bmP$ is set as the EM preconditioner given by \eqref{EM}. We refer to the AFPPA with the GN momentum in \eqref{GNtheta} as AFPPA-GN, and the AFPPA with the original Nesterov momentum \cite{nesterov1983method,beck2009fast} as AFPPA-N, where the Nesterov momentum parameters are given by
$$
\theta_k=\frac{t_{k-1}-1}{t_k},\ \ t_k=\frac{1+\sqrt{1+4t_{k-1}^2}}{2},\ \ k\in\bbN_+.
$$

We next describe how we set up the parameters of the experiment for the comparison of AFPPA-GN with AFPPA-N.  We use these two algorithms to reconstruct the brain phantom shown in Figure \ref{fig:phantom} (a). In this experiment, we set $TC=1.7\times10^7$ and choose the regularization parameters $\lambda_1=\lambda_2=0.007$. The other two algorithmic parameters $\rho_1$ and $\rho_2$ are set to $\rho_1=\frac{1}{2\times8\times p_{\max}}$ and $\rho_2=\frac{1}{2\times64\times p_{\max}}$ according to \cite{lin2019krasnoselskii}, where $p_{\max}$ denotes the maximal diagonal entry of the preconditioning matrix $\bmP$. To better evaluate the convergence of the algorithms, we define the Relative Error (RE) at the $k$th iteration by
$$
\text{RE}(k):=\frac{\|\bmf^k-\bmf^{k-1}\|_2}{\|\bmf^k\|_2}.
$$
A steadily diminishing RE indicates the progressive convergence of the algorithm. Figure \ref{nonOS_BrainAFPPA} shows the plots of lg(RE) and PSNR against the number of iterations.
\begin{figure}[htbp]
\centering
\begin{tabular}{cc}
\includegraphics[width=0.4\textwidth]{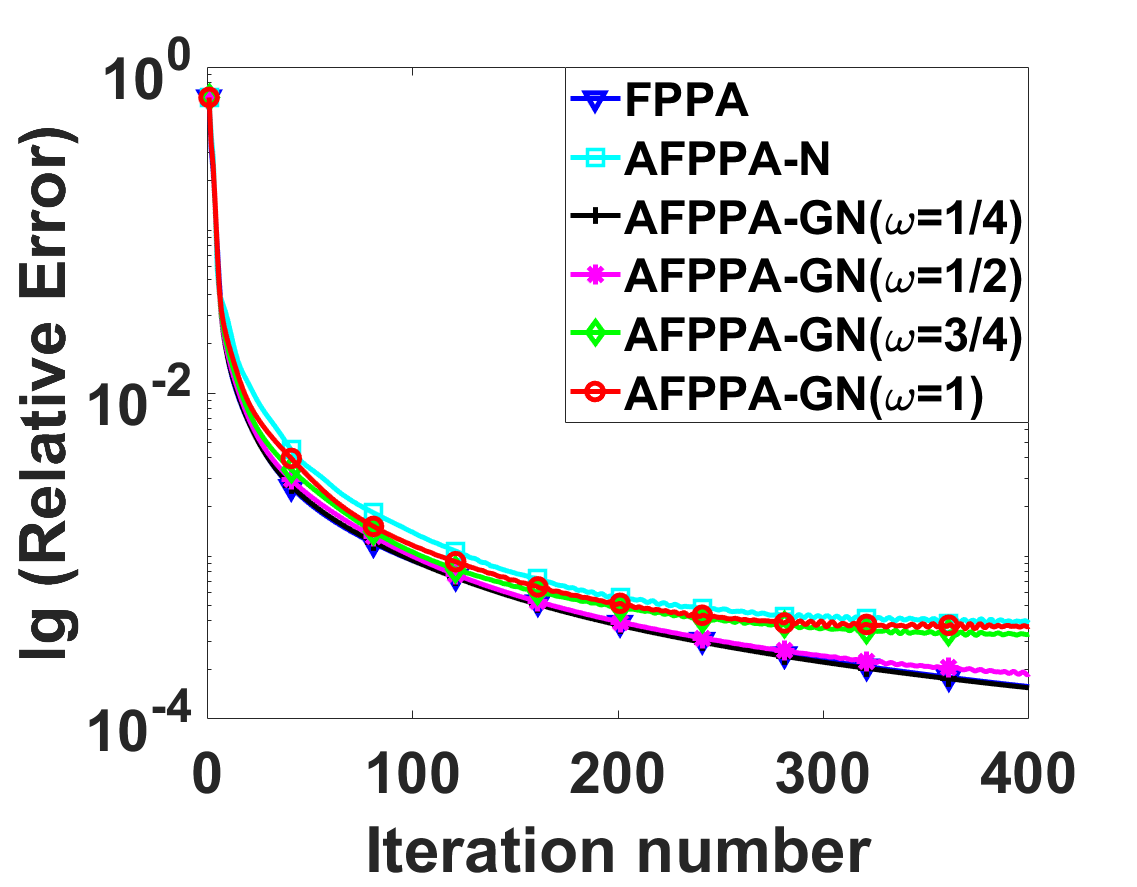}
&\includegraphics[width=0.4\textwidth]{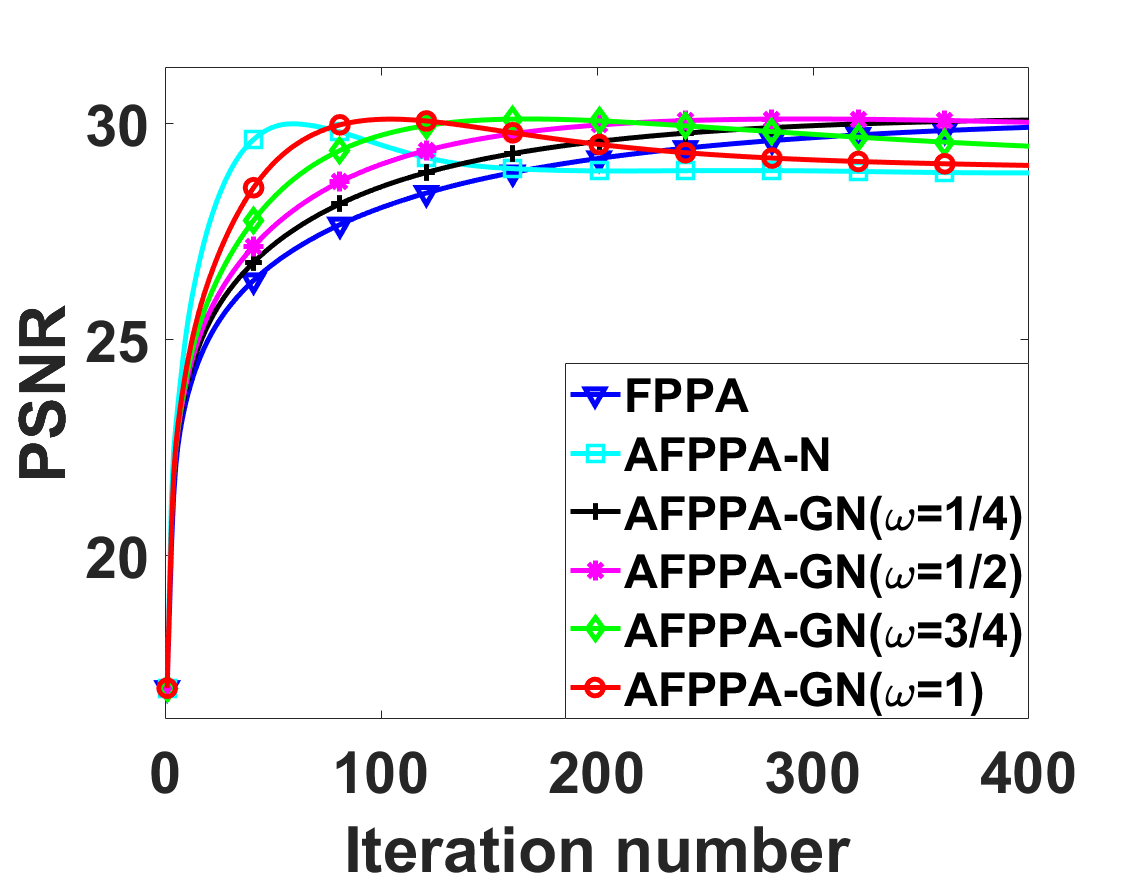}
\end{tabular}
\caption{lg(RE) (left) and PSNR (right) versus iteration number by FPPA, AFPPA-N and AFPPA-GN($\omega=1/4,1/2,3/4,1$).}
\label{nonOS_BrainAFPPA}
\end{figure}

From Figure \ref{nonOS_BrainAFPPA}, we observe that for such a complex objective function, increasing the exponent parameter leads to less stable convergence of the algorithm, and may fail to converge. When the exponent parameter $\omega$ is set to 1/4 or 1/2, the RE of AFPPA-GN continues to decline over 400 iterations, while its PSNR stabilizes at a higher level than FPPA. This indicates that with an appropriate exponent parameter, AFPPA-GN not only ensures effective convergence but also demonstrates superior convergence speed compared to FPPA. In contrast, in the later stages of iteration, the RE of AFPPA-N ceases to decrease, indicating that the algorithm fails to converge. This lack of convergence also causes the PSNR of AFPPA-N to decline after initially increasing to a certain level. Consequently, the final PSNR value is much lower than that of FPPA without momentum acceleration and AFPPA-GN with exponent parameters set to 1/2 or 1/4. These experimental results demonstrate that the GN momentum technique has broader applicability compared to conventional Nesterov momentum. By controlling the exponent parameter $\omega$, it not only maintains acceleration performance but also ensures stable convergence.

\section{Concluding remarks}
This paper proposes a readily implemented and mathematically robust Accelerated Preconditioned Proximal Gradient Algorithm (APPGA) to address a class of PET image reconstruction models with differentiable regularizers. We prove that APPGA with the Generalized Nesterov (GN) momentum enjoys favorable theoretical convergence properties. Specifically, we establish that it achieves an $o(\frac{1}{k^{2\omega}})$ convergence rate in terms of the function value, and an $o(\frac{1}{k^{\omega}})$ convergence rate in terms of the distance between consecutive iterates, where $\omega\in(0,1]$ is the power parameter of the GN momentum. To improve the efficiency and convergence rate for nondifferentiable higher-order Isotropic Total Variation (ITV) regularized PET image reconstruction, we smooth the ITV term in the model and then apply the APPGA to solve it. Numerical results reveal that as $\omega\in(0,1]$ increases, APPGA exhibits a significantly faster convergence rate, surpassing the performance of both PKMA and PPGA in the reconstruction of PET images using the Smoothed Higher-Order ITV regularization (SHOITV). Moreover, we provide insights into the extensive applicability of the GN momentum in dealing with the original nonsmoothed HOITV regularized PET model, emphasizing its robustness and adaptability when compared to the conventional Nesterov's momentum. We conclude that the proposed method based on APPGA with the GN momentum and the SHOITV regularization performs favorably and is very promising for PET image reconstruction.

\section*{Acknowledgments}
Yizun Lin is supported in part by the National Natural Science Foundation of China under Grant 12401120, by Guangdong Basic and Applied Basic Research Foundation under Grant 2021A1515110541, and by the Science and Technology Planning Project of Guangzhou under Grant 2024A04J3940. C. Ross Schmidtlein is supported in part by the NIH/NCI R21 CA263876, NIH/NIBIB R01 EB034742, and by the MSK Cancer Center Support Grant\slash Core Grant P30 CA008748. Deren Han is supported in part by the Ministry of Science and Technology of China (No.2021YFA1003600), by the R\&D project of Pazhou Lab (Huangpu) (2023K0604), and by the National Natural Science Foundation of China under Grants 12131004 and 12126603.

\section*{References}
\bibliographystyle{acm}
\bibliography{bibfile}

\end{document}